    \newcommand*\patchAmsMathEnvironmentForLineno[1]{%
       \expandafter\let\csname old#1\expandafter\endcsname\csname #1\endcsname
       \expandafter\let\csname oldend#1\expandafter\endcsname\csname end#1\endcsname
       \renewenvironment{#1}%
          {\linenomath\csname old#1\endcsname}%
          {\csname oldend#1\endcsname\endlinenomath}}%
    \newcommand*\patchBothAmsMathEnvironmentsForLineno[1]{%
       \patchAmsMathEnvironmentForLineno{#1}%
       \patchAmsMathEnvironmentForLineno{#1*}}%
\providecommand{\tabularnewline}{\\}
\newtheorem{thm}{Theorem}[section]
\newtheorem{lem}[thm]{Lemma}
\newtheorem{proposition}[thm]{Proposition}
\newtheorem{corollary}[thm]{Corollary}
\newtheorem{conjecture}[thm]{Conjecture}
\newtheorem{definition}[thm]{Definition}
\newtheorem{remark}[thm]{Remark}
\date{}
\newcommand{\emaill}[1]{\email{\href{mailto:#1}{#1}}}
\newcommand{\website}[1]{\urladdr{\href{#1}{#1}}}
\global\long\def\One{\mathds{1}}%
\global\long\def\Laplacian{\Delta}%
\global\long\def\grad{\nabla}%
\global\long\def\norm#1{\left\Vert #1\right\Vert }%
\global\long\def\zz{\mathbb{Z}}%
\global\long\def\rr{\mathbb{R}}%
\global\long\def\nn{\mathbb{N}}%
\global\long\def\pp{\mathbb{P}}%
\global\long\def\ee{\mathbb{E}}%
\global\long\def\floor#1{\left\lfloor #1\right\rfloor }%
\global\long\def\windnum{\mathcal{W}}%
\newcommand{\dd}{\mathrm{d}}
\newcommand{\mZ}{\mathcal{Z}}
\newcommand{\spin}{\mathcal S}
\newcommand{\eqlaw}{\stackrel{(\rm{d})}{=}}
\title{Topologically Induced Metastability in Periodic XY Chain}
\author{Cl\'ement Cosco}
\author{Assaf Shapira}
\website{https://assafshap.github.io/}
\begin{document}
\begin{abstract}
Non-trivial topological behavior appears in many different contexts in statistical physics, perhaps the most known one being the Kosterlitz-Thouless phase transition in the two dimensional XY model. We study the behavior of a simpler, one dimensional, XY chain with periodic boundary and strong interactions; but rather than concentrating on the equilibrium measure we try to understand its dynamics. The equivalent of the Kosterlitz-Thouless transition in this one dimensional case happens when the interaction strength scales like the size of the system $N$, yet we show that a sharp transition for the dynamics occurs at the scale of $\log N$ -- when the interactions are weaker than a certain threshold topological phases could not be observed over long times, while for interactions that are stronger than that threshold topological phases become metastable, surviving for diverging time scales. 
\end{abstract}
\maketitle

{\sl Keywords}: metastability, XY model, SDEs, topological phases

\section{Introduction}

Since the seminal work of Kosterlitz and Thouless \cite{KT73}, physical
systems with non-trivial topological phases have been studied in many
different contexts. One particularly interesting phenomenon studied
in \cite{KT73} is the formation of vortices in the two dimensional
XY model, and the phase transition related to it. The existence of
this phase transition has been proven rigorously in the work of Fr\"ohlich and
Spencer \cite{FroehlichSpencer}; and understanding in depth the topological
behavior of these systems is a very challenging problem.

The purpose of this paper is to study systems with non-trivial topological
phases from a different point of view -- rather than analyzing their
equilibrium measure, we will try to understand their dynamics. For
the sake of this study, we consider a simple, one dimensional, chain
of rotors $X=\left(X_{1},\dots,X_{N}\right)\in\spin^{N}$, where $\spin=\rr/2\pi\zz$.
In order to see a topological effect, we take periodic boundary conditions,
setting $X_{0}=X_{N}$. The interaction between the rotors will be
given by the XY Hamiltonian
\[
H(X_{1},\dots,X_{N})=-J\sum_{i=1}^{N}\cos(X_{i}-X_{i-1}),
\]
which defines the equilibrium measure $\mu$ on $\spin^{N}$:
\begin{equation*}\label{eq:mu_intro}
\text{d}\mu=e^{-\beta H}/\mathcal Z_N\text{d}\lambda,
\end{equation*}
with $\lambda$ the Lebesgue measure.

For fixed inverse temperature $\beta$ and coupling constant $J$
its behavior is trivial, but if $\beta J\rightarrow\infty$ with $N$
we can see different topological phases -- imagine that the interactions
are strong enough, so that the angles between neighboring rotors are all very small. In this case,
the discrete chain looks like a continuous circle -- for each point $t \in \rr/\zz$ we can define $\phi(t) = X_{\floor{N t}}$; and when the interactions are very strong we expect the field $\phi$ to behave like a continuous function.
If this is indeed the case, we can consider topological properties of this field, and in particular, as a continuous function from $\rr/\zz$ to $\spin$ (both homeomorphic to $S^1$), it has a winding number counting how many times the path described by $\phi$ turns around $\spin$. The winding number is a \emph{topological invariant} -- it does not change under continuous deformation of $\phi$ (namely homotopy).

In fact, the winding number could be defined on a discrete chain $\left(x_{1},\dots,x_{N}\right)\in\spin^N$
whenever $x_{i}-x_{i+1}\neq\pi$ for all $i$ -- for an angle $\theta\in \spin \setminus\left\{ \pi\right\} $,
denote by $\tilde{\theta}$ its representative in $(-\pi,\pi]$ -- 
then the winding number is defined as\footnote{On points where $x_{i}-x_{i+1} = \pi$ for some $i$ the winding number is ill defined; it will sometimes be convenient for us to set $\windnum = \infty$.}
\begin{equation}\label{eq:windnum}
\windnum(x_{1},\dots,x_{N})=\frac{1}{2\pi}\sum_{i=1}^{N}\widetilde{x_{i}-x_{i-1}}.
\end{equation}
This number must be an integer due to the periodic boundary condition,
and it is continuous in its domain of definition. Therefore, it could
only change when at some position $i$ the two rotors $X_{i}$ and
$X_{i+1}$ point in opposite directions. It should be noted that each
phase (defined by a winding number) contains a unique minimum of $H$
at $X=\left(\frac{2\pi\windnum}{N}i\right)_{i=1}^{N}$, and that the
energy difference between these minima is of order $\frac{1}{N}$
(for fixed $\windnum$).

In this paper we will focus on the reversible process with respect
to the measure $\mu$ given by the It\^o equation 
\begin{equation} \label{eq:diffusion}
\text{d}X (t)=-\grad H\left(X(t)\right)+\sigma\text{d}B_{t}(t),
\end{equation}
 where $\sigma^{2}=\frac{1}{2\beta}$ and $B_1,\dots, B_N$ are independent Brownian motions on $\spin$. Imagine, for example, the initial
configuration in which $X_{i}=\frac{2\pi i}{N}$, so the winding number
in the beginning equals $1$. In order to exit the $\windnum=1$ phase,
we must overcome a certain energy barrier. Indeed, the energy of each
pair in the beginning is $-J\cos\left(\frac{2\pi}{N}\right)\approx-J$,
but the winding number could only change when some pair has an interaction
energy $-J\cos\left(\pi\right)=J$. That is, the energy barrier is
$2J$. Thus, for fixed $N$, we expect a waiting time of the scale
$e^{J/\sigma^{2}}$ to change the winding number as $\beta J\rightarrow\infty$
\cite{BovierDenHollander,OlivieriVares}.

In this paper we will also be interested in the dependence of this
time on $N$, which will add an entropic term to that expression --
since there are $N$ possible positions for neighboring rotors
to point in opposite directions, the entropy of the transition state
should equal approximately $\log N$, giving the time scale $e^{J/\sigma^{2}-\log N}$.
Probabilistically, this is the result we expect if the winding number
changed independently at each pair $\left(X_{i},X_{i+1}\right)$ with
rate $e^{-J/\sigma^{2}}$. We prove that this is indeed the correct
time scale in which the winding number changes.

\section{Model and notation}

For $N\in\nn$ we consider a periodic chain of rotors $X^{(N)}_1,\dots,X^{(N)}_N=X^{(N)}_0$ on the one dimensional torus $\spin=\rr/2\pi\zz$. They evolve according to the It\^o equation
\begin{equation} \label{eq:ito}
\dd X^{(N)}(t)  =-\grad H_N\left(X^{(N)}(t)\right)+\sigma_{N}\text{d}B^{(N)}(t),
\end{equation}
with the Hamiltonian
\begin{equation} \label{eq:hamiltonian}
H_N(x_1,\dots,x_N) = -J_N \sum \limits_{i=1}^N \cos(x_i - x_{i-1}),
\end{equation}
where $x_0=x_N$, and where $B^{(N)}=(B_1^{(N)},\dots B_N^{(N)})$ is a vector of independent standard Brownian motions on $\spin$ (each of which obtained through projection of a real-valued standard Brownian motion). In the following we may omit the subscript or superscript $N$ to simplify the notation. We use $C$ to denote a generic positive constant.

This process is reversible with respect to the equilibrium measure $\mu_N$ on $\spin^N$ given by 
\begin{equation}\label{eq:mu}
\dd \mu_N(x_1,\dots,x_N)=\frac{e^{-\frac{1}{2\sigma^2} H_N(x_1,\dots,x_N)}}{\mathcal Z_N}\dd \lambda(x_1,\dots,x_N),
\end{equation}
where $\lambda$ is the Lebesgue measure and $\mathcal{Z}_N$ a normalization constant.
Probabilities and expected values with respect to the It\^o process will be denoted $\pp^{(N)},\ee^{(N)}$ (or simply $\pp,\ee$ when clear from the context). When starting from equilibrium we write $\pp_\mu, \ee_\mu$, and when the initial state is drawn from equilibrium conditioned on being in some set $A$ we write $\pp_{\mu|A}, \ee_{\mu|A}$. We will often consider the process starting from equilibrium conditioned on the event $\{\windnum = k\}$ (recall equation \eqref{eq:windnum}); in this case we use the notation $\pp_{\mu|k}, \ee_{\mu|k}$.
Finally, we denote the hitting time of an event $A\subseteq \spin^N$ by $\tau_{A}$, and for the
event $\left\{ \windnum\neq k\right\} $ (understood to contain also the points in which $\windnum$ is not defined) we write $\tau_{k\pm1}$.

We will start by studying the equilibrium measure $\mu$. This analysis will help us understand the equilibrium properties of $\windnum$, and to obtain estimates that will be used in order to study the dynamics.
We will see that when the temperature is very low, $J/\sigma^2 \gg N$, the winding number equals $0$ with very high probability. On the other hand, in the regime $J/\sigma^2 \ll N$ it satisfies a central limit theorem, fluctuating around $0$. The emergence of non-trivial topology, appearing at temperatures above $N^{-1}$, corresponds in the two dimensional model to temperatures above the Kosterlitz-Thouless critical point \cite{KT73}. 
\begin{thm}\label{thm:cltforw}
Assume that $J/\sigma^2 \rightarrow \infty$ as $N\to\infty$. 
\begin{itemize}
\item If $J/\sigma^2 \ll N$, then
\begin{equation*}
2\pi \sqrt{\frac{J}{2 \sigma^2 N}} \, \windnum^{(N)} \overset{(d)}{\longrightarrow} \mathcal N(0,1).
\end{equation*}
\item If  $J/\sigma^2 \gg N$, then 
\begin{equation*}
\mu(\windnum^{(N)}=0)\rightarrow 1.
\end{equation*}
\end{itemize}
\end{thm}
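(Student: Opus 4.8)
The plan is to identify $\windnum$, in law under $\mu$, with a sum of i.i.d.\ angular increments subjected to a topological (lattice) constraint, and then to read off both branches of the dichotomy from a single local central limit theorem whose constants are tracked explicitly in the concentration parameter $\kappa:=J/(2\sigma^{2})$ (note $\dd\mu\propto\exp(\kappa\sum_{i}\cos(x_{i}-x_{i-1}))\,\dd\lambda$ and $\kappa\to\infty$ by assumption). Since $\mu$ is invariant under the global rotation $x_{i}\mapsto x_{i}+c$ and $\windnum$ is rotation invariant, one may condition on $x_{N}=0$; writing $W_{i}=\widetilde{x_{i}-x_{i-1}}\in(-\pi,\pi]$ and passing to the (a.e.\ Lebesgue‑preserving) coordinates $(W_{1},\dots,W_{N-1})$ on $(-\pi,\pi]^{N-1}$, the density becomes proportional to $\prod_{i=1}^{N}e^{\kappa\cos W_{i}}$ with $W_{N}:=\widetilde{-(W_{1}+\dots+W_{N-1})}$ a dependent variable, and $2\pi\windnum=\sum_{i=1}^{N}W_{i}$. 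The cleanest way to make this rigorous is the transfer operator on $L^{2}(\spin)$ with kernel $e^{\kappa\cos(x-y)}$, which is diagonal in the Fourier basis; twisting the kernel by $e^{i\alpha\widetilde{x-y}}$ computes $\ee_{\mu}[e^{2\pi i\alpha\windnum}]$, and Fourier inversion then gives
\[
\mu(\windnum=k)=\frac{p_{N}(2\pi k)}{\sum_{j\in\zz}p_{N}(2\pi j)},
\]
where $p_{N}$ is the density of $T_{N}:=V_{1}+\dots+V_{N}$, the $V_{i}$ being i.i.d.\ with density $\propto e^{\kappa\cos v}$ on $(-\pi,\pi]$; equivalently $\windnum\eqlaw\tfrac{1}{2\pi}T_{N}$ conditioned on $\{T_{N}\in2\pi\zz\}$.

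Next I would collect the needed estimates on one increment. By symmetry $\ee V_{1}=0$, and a Laplace expansion of $\int v^{2}e^{\kappa\cos v}\,\dd v$ gives $\kappa\operatorname{Var}(V_{1})\to1$ — precisely what fixes the constant in the first claim. For the characteristic function $\phi_{\kappa}(t):=\ee[e^{itV_{1}}]$ I need a saddle‑point estimate $\phi_{\kappa}(t)=e^{-(1+o(1))t^{2}/(2\kappa)}$, uniformly for $|t|\le\epsilon\kappa$, and — crucially — the bound $|\phi_{\kappa}(t)|\le C\sqrt{\kappa}/|t|$ for all $t$, obtained by one integration by parts using that $v\mapsto e^{\kappa\cos v}$ is smooth and periodic, together with $|\phi_{\kappa}'(t)|\le C\min(1/\sqrt{\kappa},|t|/\kappa)$ and $|\phi_{\kappa}''(t)|\le C/\kappa$ (differentiation under the integral). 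Rescaling $\widehat T_{N}:=\sqrt{\kappa/N}\,T_{N}$, with density $f_{N}$, makes $\widehat T_{N}$ a normalised sum of i.i.d.\ variables of variance $\to1$; running the Fourier‑inversion proof of the local CLT — splitting $\int(\cdot)\,\dd t$ into a small‑$|t|$ window (Taylor/saddle point, producing the Gaussian), a mid‑range where $|\phi_{\kappa}(t\sqrt{\kappa/N})|$ is bounded away from $1$, and a far tail where $|\phi_{\kappa}(t\sqrt{\kappa/N})|\le C\sqrt{N}/|t|$ so the contribution decays faster than any power — gives $\sup_{y}|f_{N}(y)-\varphi(y)|\to0$, with $\varphi$ the standard Gaussian density. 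Applying the integration‑by‑parts bound twice (to $y^{2}f_{N}(y)=-\tfrac1{2\pi}\int\partial_{t}^{2}\!\bigl[\phi_{\kappa}(t\sqrt{\kappa/N})^{N}\bigr]e^{-ity}\,\dd t$, whose integrand is bounded in $L^{1}$ uniformly in $N$ via the $\min$‑bounds above) yields the uniform tail control $f_{N}(y)\le C/(1+y^{2})$.

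To conclude, put $h_{N}:=2\pi\sqrt{\kappa/N}=2\pi\sqrt{J/(2\sigma^{2}N)}$, so that by the reduction the law of $h_{N}\windnum$ is that of $\widehat T_{N}$ conditioned to the lattice $h_{N}\zz$: it puts mass $f_{N}(h_{N}k)/\sum_{j}f_{N}(h_{N}j)$ at $h_{N}k$. If $J/\sigma^{2}\ll N$ then $h_{N}\to0$; using $f_{N}\to\varphi$ uniformly and the domination $f_{N}\le C/(1+y^{2})$, the Riemann sums $\sum_{k}g(h_{N}k)\,h_{N}f_{N}(h_{N}k)$ converge to $\int g\,\varphi$ for bounded continuous $g$ (and $\sum_{j}h_{N}f_{N}(h_{N}j)\to1$), whence $h_{N}\windnum\to\mathcal N(0,1)$ — the first claim. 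If $J/\sigma^{2}\gg N$ then $h_{N}\to\infty$; since $f_{N}(0)\to\varphi(0)>0$,
\[
\mu(\windnum\neq0)=\frac{\sum_{j\neq0}f_{N}(h_{N}j)}{\sum_{j\in\zz}f_{N}(h_{N}j)}\le\frac{1}{f_{N}(0)}\sum_{j\neq0}\frac{C}{(h_{N}j)^{2}}=\frac{C'}{h_{N}^{2}f_{N}(0)}\longrightarrow0,
\]
which is the second claim.

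The main obstacle is the uniform‑in‑$\kappa$ local CLT together with its uniform tail bound, since here the summands $V_{i}$ themselves concentrate as $N$ grows: this is a triangular array, so no textbook local limit theorem applies and the Fourier‑analytic proof has to be redone with the $\kappa$‑dependence carried through every estimate. What makes everything go through is the smoothness of the density of $V_{1}$, giving the decay $|\phi_{\kappa}(t)|\le C\sqrt{\kappa}/|t|$ that controls both the mid‑range and the far tails of the Fourier integral; a secondary delicate point is the sharp asymptotics $\kappa\operatorname{Var}(V_{1})\to1$, on which the normalising constant in the first claim depends.
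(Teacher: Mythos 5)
Your proposal is correct in outline, and for the first branch it follows essentially the paper's route: your identity $\mu(\windnum=k)=p_N(2\pi k)/\sum_{j}p_N(2\pi j)$ is exactly the random-walk-bridge representation of Corollary \ref{cor:WindNumber}, and the convergence is then read off from a local limit theorem for the density of the sum of increments, uniform in the concentration parameter, followed by a Riemann-sum argument -- the same mechanism as the paper's proof via the estimate \eqref{eq:LLT}. The one difference there is that you propose to re-derive the uniform local CLT by hand (Fourier inversion with the $\kappa$-dependence tracked, using $|\phi_\kappa(t)|\le C\sqrt{\kappa}/|t|$ from integration by parts, which is indeed correct); the paper instead cites an existing uniform LLT for densities whose error constant is explicit in $s$, $\beta^3$ and $\sup g$, and feeds in the Laplace asymptotics of Proposition \ref{prop:msbeta}, so your heaviest technical block can be replaced by a citation rather than redone as a triangular-array argument. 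For the second branch the routes genuinely diverge: the paper avoids any LLT in the regime $J/\sigma^2\gg N$ and argues elementarily that a convolution of even unimodal densities is even and unimodal (Lemma \ref{lem:convolution}), so $g^{\star N}$ peaks at $0$; Chebyshev (variance $Ns^2\to 0$) gives $g^{\star N}(0)\ge 1-\varepsilon_N$, and unimodality dominates $\sum_{k\neq 0}g^{\star N}(k)$ by a vanishing tail integral. You instead push the uniform LLT further, adding the tail bound $f_N(y)\le C/(1+y^2)$ and summing over the sparse lattice $h_N\zz$ with $h_N\to\infty$; this is valid and more unified (one estimate drives both regimes), but it stands or falls on that quadratic tail bound being uniform in $\kappa$ with no upper restriction on $\kappa/N$ -- your scheme plausibly delivers it, since beyond the central window one only needs $|\phi_\kappa(u)|$ bounded away from $1$ for $|u|\asymp\sqrt{\kappa}$ together with the $C\sqrt{\kappa}/|u|$ decay -- whereas the paper's second-branch argument is softer and sidesteps this work entirely. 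One small caution: your claimed uniform asymptotic $\phi_\kappa(t)=e^{-(1+o(1))t^2/(2\kappa)}$ on the whole range $|t|\le\epsilon\kappa$ is stronger than needed and not accurate near $|t|\asymp\kappa$ (fourth-cumulant corrections are of the same order there); only the window $|t|=O(\sqrt{\kappa})$ is required for the Gaussian part, and your remaining bounds cover the rest.
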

The proof of this theorem is given at the end of Section \ref{sec:afewprop}.

After establishing these properties about the equilibrium measure, we will bound $\tau_{k\pm 1}$ from below. We follow the ideas of \cite{OlivieriVares}, but some extra care is needed in order to obtain the correct dependence on $N$.
\begin{thm} \label{th:mainTheorem}
Assume there exists a positive constant $C$ such that $C\log N \leq J/\sigma^2 =o(N) $ and that $J = o(N^\alpha)$ for all $\alpha >0$. 
Then for all $\varepsilon>0$ small enough, as $N\rightarrow \infty$,
\begin{equation} \label{eq:lowerBound}
\pp_{\mu|k}\left(\tau_{k\pm1}\le e^{\frac{J}{\sigma^{2}}\,(1-\varepsilon)-\log N} \right) \to 0.
\end{equation}
\end{thm}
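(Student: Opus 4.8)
The plan is to reduce the change of the topological invariant $\windnum$ to a local event for a single bond, and then to run a one–dimensional metastability estimate in the spirit of \cite{OlivieriVares}, taking care to follow the entropic factor $N$.

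First I would localise. Since $\windnum$ is integer valued and continuous on its domain of definition, a trajectory can leave $\{\windnum=k\}$ only at an instant when some bond difference $y_i(t):=X_i(t)-X_{i-1}(t)$ passes through $\pi$, so $\tau_{k\pm1}=\inf\{t\ge 0:\ \exists i,\ y_i(t)=\pi\}$; in particular reaching $\pi$ forces $|y_i|$ to cross $\pi-\delta$ first, for any $0<\delta<\pi$. Because \eqref{eq:ito} and $\mu$, hence $\mu|k$, are invariant under cyclic shifts of the indices, a union bound gives, with $T:=e^{\frac{J}{\sigma^2}(1-\varepsilon)-\log N}$,
\begin{equation*}
\pp_{\mu|k}\big(\tau_{k\pm1}\le T\big)\ \le\ N\,\pp_{\mu|k}\big(\exists\,t\le T:\ |y_1(t)|\ge\pi-\delta\big),
\end{equation*}
so it is enough to show that the right–hand side vanishes for a suitable $\delta=\delta(\varepsilon)>0$.

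I would bound the one–bond probability by splitting it into two contributions. (a) The initial configuration — or the configuration after a relaxation time $T_0=\mathrm{poly}(N)$ during which the dynamics inside $\{\windnum=k\}$ equilibrates — already has $y_1$ within diffusive reach $O(\sqrt{\sigma^2T_0})$ of the level $\pi-\delta$. The equilibrium estimates of Section~\ref{sec:afewprop} bound the $\mu|k$–probability of $|y_1|$ lying within $d$ of $\pi-\delta$ by $\lesssim d\,\sqrt{J/\sigma^2}\,e^{-\frac{J}{\sigma^2}(1-O(\delta^2))}$, which after accounting for $T_0$ contributes $\lesssim \mathrm{poly}(N)\,e^{-\frac{J}{\sigma^2}(1-O(\delta^2))}$. (b) Starting from a configuration deep in the well near the minimiser $m_k$, the bond $y_1$ must climb past $\pi-\delta$; conditionally on the event (controlled by a stopping–time argument, see below) that no other bond is large, the collective relaxation of the remaining rotors keeps the effective barrier for this change at $2J(1-O(\delta^2))$, so by the Arrhenius law for the reversible measure $\propto e^{-H/(2\sigma^2)}$ a single "attempt" succeeds with probability $\lesssim \mathrm{poly}(N)\,e^{-\frac{J}{\sigma^2}(1-O(\delta^2))}$, while the strong Markov property bounds the number of attempts before $T$ by $\lesssim \mathrm{poly}(N)\,T$. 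Multiplying by the factor $N$ of the display and inserting $T$, the bound becomes $\lesssim \mathrm{poly}(N)\big(e^{-\frac{J}{\sigma^2}(1-O(\delta^2))}+e^{-\frac{J}{\sigma^2}(\varepsilon-O(\delta^2))}\big)$; choosing $\delta$ small compared with $\varepsilon$, and using $J/\sigma^2\ge C\log N$ to absorb the polynomial factors into the exponentials and $J=o(N^\alpha)$ for all $\alpha$ to kill the remaining sub–polynomial corrections, this tends to $0$ (this balancing of $\delta$, $\varepsilon$ and $C$ is what forces $\varepsilon$ to be taken small enough).

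The main difficulty is the non–autonomy of $y_1$: its drift $-2J\sin y_1+J(\sin y_0+\sin y_2)$ involves the neighbouring differences, which themselves fluctuate, and over the exponentially long horizon $T$ some bonds will certainly reach values of order $1$, so one cannot simply freeze the environment. One therefore has to show, by a stopping–time/energy argument that simultaneously serves part (a), that with probability $1-o(1/N)$ up to time $T$ every bond other than the one under consideration stays far enough from $\pi$ that the cheapest way to realise a winding change is still the one that raises the energy by $2J(1-o(1))$ — a single bond reaching $\pi$ while all the others perform a nearly rigid compensating shift — rather than some cheaper or unstable mechanism. Carrying out this control, and keeping explicit track of the polynomial–in–$N$ prefactors in the Arrhenius–type estimates instead of absorbing them into an unspecified constant as in the fixed–volume setting of \cite{OlivieriVares} (and verifying that the equilibrium inputs are stable under conditioning on $\{\windnum=k\}$), is the extra work needed to pin down the $e^{-\log N}$ correction.
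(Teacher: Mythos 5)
Your static inputs (the equilibrium cost $e^{-\frac{J}{\sigma^2}(1-O(\delta^2))}$ for a bond near $\pi$, the entropic factor $N$, the reduction of a winding change to some bond crossing $\pi$) match the paper, but the dynamical core of your argument is a genuine gap, and it is exactly the part the paper does differently. The paper never proves, and never needs, an Arrhenius-type bound for a ``single attempt'' nor a strong-Markov count of attempts: its Lemma \ref{lm:LemmaForlowerBound} converts the whole dynamical question into a purely static one. One discretizes $[0,T]$ with mesh $\Delta\asymp \delta^{2}/\bigl(\sigma^{2}(\log T+\log N)\bigr)$, so that off an event of vanishing probability (drift at most $J\Delta$ plus Brownian oscillation at most $\delta/4$) no coordinate moves more than $\delta/2$ between consecutive grid times; hence hitting $A=\{\exists i:\ X_i-X_{i-1}=\pi\}$ before $T$ forces the configuration to lie in the $\delta$-thickening $A_\delta$ at some grid time. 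Then \emph{stationarity of $\mu$} gives $\pp_{\mu|k}(X_{t_j}\in A_\delta)\le \mu(A_\delta)/\mu(\windnum=k)$ for each fixed grid time, the random-walk representation and Proposition \ref{prop:estimateProbWeqk} bound this ratio by $C\frac{\sqrt J}{\sigma}e^{-\frac{J}{\sigma^{2}}(1-O(\delta^{2}))+\log N}$, and a union bound over the $\sim T/\Delta$ grid times with $T=e^{\frac{J}{\sigma^{2}}(1-O(\delta^{2}))-\log N}$ closes the proof. No control of the drift of $y_1$, no freezing of the environment, no renewal structure is required.

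By contrast, your step (b) asserts precisely what has to be proven: you do not define what an ``attempt'' is, why a single attempt succeeds with probability $\mathrm{poly}(N)\,e^{-\frac{J}{\sigma^{2}}(1-O(\delta^2))}$ (an equilibrium computation does not by itself bound an escape probability along an excursion), nor why the number of attempts before $T$ is $\mathrm{poly}(N)\,T$ (this needs a lower bound on the duration, or an upper bound on the rate, of attempts). Worse, the environment control you defer to -- with probability $1-o(1/N)$, up to the exponentially long time $T$ every bond other than the chosen one stays far from $\pi$ -- is a \emph{stronger} statement than the theorem itself (which only claims that with probability $1-o(1)$ no bond reaches $\pi$ before $T$), so as written the plan is circular unless you specify a bootstrap at a weaker threshold, which you do not. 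Finally, step (a) is quantitatively empty: over a relaxation time $T_0=\mathrm{poly}(N)$ the ``diffusive reach'' $\sqrt{\sigma^{2}T_0}$ is in the relevant regimes far larger than the equilibrium bond scale $\sqrt{\sigma^{2}/J}$ (and than any fixed $\delta$), so being within diffusive reach of $\pi-\delta$ carries no information; the increment control must be done on the much shorter scale $\Delta$ above. Moreover, a $\mathrm{poly}(N)$ equilibration of the dynamics restricted to $\{\windnum=k\}$ is itself an unproven ingredient (the paper's concluding remarks point out that such restricted-relaxation estimates are currently out of reach); the stationarity trick avoids it entirely, paying only the harmless factor $\mu(\windnum=k)^{-1}\le C\frac{\sigma}{\sqrt J}N^{1/2}$.
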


We will finish by proving an upper bound for $\tau_{k\pm 1}$ using a variational principle introduced in \cite{AdP} together with a path argument.
\begin{thm} \label{th:mainTheorem2}
Assume that there exist positive constants $C,C_1$ such that $C \log N < J/\sigma^2 =o(N)$, and in addition  $\sigma =o(N^\alpha)$ for all $\alpha >0$.
Then for all $\varepsilon>0$ small enough, as $N\rightarrow \infty$,
\begin{equation} \label{eq:upperBound}
\pp_{\mu|k}\left(\tau_{k\pm1}\ge e^{\frac{J}{\sigma^{2}}\,(1+\varepsilon)-\log N} \right) \to 0.
\end{equation}
\end{thm}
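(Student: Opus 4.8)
The plan is to bound the expected hitting time $\ee_{\mu|k}[\tau_{k\pm1}]$ from above by $e^{\frac{J}{\sigma^2}(1+\varepsilon)-\log N}$ (up to constants) and then conclude via Markov's inequality. To control the mean hitting time we invoke the variational principle of \cite{AdP}, which expresses $\ee_{\mu|k}[\tau_{k\pm1}]$ (or a closely related capacity-type quantity) as an infimum over test objects — unit flows, or equivalently divergence-free vector fields, from the set $\{\windnum = k\}$ to its complement. The strategy is then to exhibit a \emph{single good test flow} and estimate the associated Dirichlet-type functional, producing the claimed upper bound. Crucially, since the variational characterization is an infimum, we are free to choose a convenient flow: any admissible choice gives a valid upper bound.

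The flow I would construct follows the heuristic already stated in the introduction: the winding number is expected to change by passing, at \emph{one} of the $N$ positions $i$, through the saddle configuration where $X_i - X_{i-1}$ sweeps across $\pi$. So I would build a path in $\spin^N$ that starts near the minimum $X_j = \frac{2\pi k}{N}j$, moves the single angular increment at position $i$ from its typical value through $\pi$ (while relaxing the other coordinates appropriately), and lands in the adjacent phase. The energy along this path rises by roughly $2J$ — more precisely by $J(1 - \cos(2\pi k/N)) + J(1+\cos(\text{small})) \approx 2J$ at leading order in the relevant regime — so the Boltzmann weight of the bottleneck is $\sim e^{-\frac{1}{2\sigma^2}\cdot 2J} = e^{-J/\sigma^2}$. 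Averaging the flow uniformly over the $N$ choices of the position $i$ (the flow is a superposition of $N$ ``channels'') produces the entropic gain of a factor $N$, i.e.\ the $-\log N$ in the exponent. The width of the saddle in the transverse directions, together with the partition function normalization $\mathcal Z_N$, contributes only polynomial-in-$N$ and polynomial-in-$\sigma$ corrections, which are absorbed into the $\varepsilon$ thanks to the hypotheses $J/\sigma^2 > C\log N$ and $\sigma = o(N^\alpha)$ for all $\alpha$; these are exactly the hypotheses that guarantee the sub-exponential corrections do not overwhelm the main term.

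I would organize the argument as follows. First, recall precisely the variational upper bound from \cite{AdP} in a form giving $\ee_{\mu|k}[\tau_{k\pm1}] \le$ (explicit functional of a test flow). Second, define the $N$-channel test flow/path described above, being careful that each channel is a genuine path from $\{\windnum=k\}$ to $\{\windnum=k\pm1\}$ and that the channels are essentially disjoint in configuration space near the saddles (so no destructive interference). Third, estimate the contribution of one channel: split into the ``bulk'' part (low energy, contributes a harmless multiplicative factor using the Gaussian-type estimates on $\mu$ from Section \ref{sec:afewprop}) and the ``bottleneck'' part near the saddle, where a Laplace/Gaussian computation in the transverse coordinates gives the dominant $e^{-J/\sigma^2}$ times polynomial factors. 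Fourth, sum over the $N$ channels to gain the factor $N$, and check that all remaining corrections are $e^{o(J/\sigma^2)}$ under the stated hypotheses. Finally, apply Markov's inequality to pass from the expectation bound to \eqref{eq:upperBound}.

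The main obstacle I anticipate is the second and third steps together: constructing the test flow so that it is simultaneously (a) genuinely admissible (divergence-free with the correct boundary behavior, or a legitimate convex combination of unit paths), (b) supported where the equilibrium measure is not too small, so that the Dirichlet functional stays controlled, and (c) organized into $N$ near-independent channels so the entropic factor is captured without hidden cancellations. Getting the transverse Gaussian integral around the saddle right — in particular tracking how the Hessian of $H_N$ at the saddle scales with $N$, and confirming the resulting determinant contributes only sub-$N^\alpha$ factors — is the delicate quantitative heart of the proof, and is precisely where the hypothesis $J/\sigma^2 > C\log N$ is needed to absorb these corrections into $e^{\varepsilon J/\sigma^2}$.
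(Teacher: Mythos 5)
Your plan rests on invoking ``the variational principle of \cite{AdP}'' as if it expressed $\ee_{\mu|k}[\tau_{k\pm1}]$ as an infimum over unit flows; this is the genuine gap. The result the paper actually imports from \cite{AdP} (Lemma \ref{lem:AdP}) is a principal-Dirichlet-eigenvalue statement: if the Poincar\'e-type inequality $\mathcal{E}f\ge\lambda\,\mu(f^2)$ holds for all $f\in C^\infty_c(A^c)$, then $\pp_\mu(\tau_A>t)\le e^{-\lambda t}$. A flow-based (Thomson/Berman--Konsowa) bound does give a \emph{lower} bound on a capacity, but capacity identities control mean hitting times started from the harmonic (last-exit biased) measure on a chosen core set, via $\ee_{\nu}[\tau_B]=\mu(h)/\mathrm{cap}$; to transfer such a bound to the start from $\mu|k$ one needs mixing or regularity control inside the well, which is precisely what is difficult in this model (see the discussion around Conjecture \ref{conj:main_conjecture} about coupling within a phase). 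As written, your first step --- ``$\ee_{\mu|k}[\tau_{k\pm1}]\le$ explicit functional of a test flow'' --- has no justification, and without it the Markov-inequality conclusion does not follow.

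A second, related structural problem is where your channels live. They are anchored near the minimizer $X_j=\frac{2\pi k}{N}j$, but under $\mu|k$ the configuration is typically nowhere near the minimizer: the phase is entropically spread, with bond fluctuations of order $\sigma/\sqrt{J}$, and there are also atypical configurations with many large bonds for which a local saddle crossing costs much more than $2J+o(J)$. The paper's route handles exactly this: it proves the Poincar\'e inequality of Lemma \ref{lem:variational_ineq} by constructing, for \emph{every} configuration $x$ outside a bad set $E$ (too few good blocks), about $N/r$ disjoint-coordinate paths from $A$ to $x$, each modifying only one block of $r$ coordinates with energy excess $2J+2\pi J(\delta+\tfrac1r)$; averaging over blocks produces the $1/N$ entropic gain, and a separate argument (Proposition \ref{prop:tauEisBig}, via Lemmas \ref{lm:LemmaForlowerBound} and \ref{lem:comparaison}) shows $E$ is not reached before time $e^{2J/\sigma^2}$, so that the enlarged target $A=\{\windnum\neq k\}\cup E$ still captures $\tau_{k\pm1}$. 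Your proposal has no analogue of the bad set, nor of the requirement that the test object reach all typical starting configurations, and it also ignores the technical work (Appendix \ref{sec:AdPproof}, cone condition) needed to use \cite{AdP} for a diffusion. Conversely, the transverse-Hessian/Laplace computation you single out as the delicate heart is not needed at this level of precision: the slack $e^{\varepsilon J/\sigma^2}$, available since $J/\sigma^2>C\log N$ and $\sigma=o(N^\alpha)$, absorbs all polynomial prefactors.
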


\section{A few properties of equilibrium} \label{sec:afewprop}
In this section we will prove some properties of the equilibrium measure $\mu$, that we will use later in the analysis of the dynamics. 

First, we will see that under $\mu$ the chain of rotors can be seen as a random walk conditioned on returning to the origin, with increments in $\mathcal{S}$ that distribute according to the density
\begin{equation}
h(x) = \frac{1}{m} e^{\frac{J}{2\sigma^2} cos(x)},
\end{equation}
where $m=\int e^{\frac{J}{2\sigma^2} cos(x)} \text{d}x$.
\begin{definition} \label{def:SkXi}
Let $(\xi_i)_{i=1}^n$ be a sequence of i.i.d.\  random variables on $\spin$ with density $h$ with respect to the Lebesgue measure, $S_k = \sum_{i=1}^k \xi_i$, and $\Xi$ be a random variable with the uniform distribution on $\spin$ and independent of the $\xi_i$'s. We denote by $\mathrm P$ the the measure of $\Xi$ and the $\xi_i$'s.
\end{definition}

\begin{proposition} \label{prop:RWrepr}
The vector $(\Xi, \Xi+S_1,\dots,\Xi+S_N)$, conditionally on $S_N=0$, has the same law as $(X_0,\dots,X_N)$ under $\mu$. Note that in particular the first and last entry are equal in both vectors.
\end{proposition}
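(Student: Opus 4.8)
The plan is to compute explicit densities on both sides and match them through measure-preserving changes of variables on the torus. First, expanding the sum in the exponent, the Boltzmann weight factorizes over the edges:
\begin{equation*}
e^{-\frac{1}{2\sigma^{2}}H_{N}(x_{1},\dots,x_{N})}=\prod_{i=1}^{N}e^{\frac{J}{2\sigma^{2}}\cos(x_{i}-x_{i-1})}=m^{N}\prod_{i=1}^{N}h(x_{i}-x_{i-1}),
\end{equation*}
where $x_{0}=x_{N}$. Identifying a configuration $(x_{1},\dots,x_{N})$ with the point $(x_{0},x_{1},\dots,x_{N})$ sitting on the diagonal $\{x_{0}=x_{N}\}\cong\spin^{N}$, this shows that $\mu$ has density proportional to $\prod_{i=1}^{N}h(x_{i}-x_{i-1})$ with respect to Lebesgue measure on $\spin^{N}$.

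On the random-walk side, by Definition~\ref{def:SkXi} the vector $(\Xi,\xi_{1},\dots,\xi_{N})$ has density $\tfrac{1}{2\pi}\prod_{i=1}^{N}h(\xi_{i})$ with respect to Lebesgue measure on $\spin^{N+1}$ (under $\mathrm P$). The map
\begin{equation*}
(\theta_{0},\theta_{1},\dots,\theta_{N})\longmapsto\bigl(\theta_{0},\,\theta_{0}+\theta_{1},\,\dots,\,\theta_{0}+\theta_{1}+\dots+\theta_{N}\bigr)
\end{equation*}
is a group automorphism of $\spin^{N+1}$ --- it is given by a unipotent integer matrix --- and hence preserves Lebesgue measure; applying it to $(\Xi,\xi_{1},\dots,\xi_{N})$ yields $(Y_{0},\dots,Y_{N}):=(\Xi,\Xi+S_{1},\dots,\Xi+S_{N})$, which therefore has density $\tfrac{1}{2\pi}\prod_{i=1}^{N}h(y_{i}-y_{i-1})$ on $\spin^{N+1}$.

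It remains to condition on $\{S_{N}=0\}$. Since $S_{N}=Y_{N}-Y_{0}$, this is the event $\{Y_{N}=Y_{0}\}$, which is Lebesgue-null, so the conditioning has to be made precise by disintegration --- this is the one point requiring care. Applying the further measure-preserving change of coordinates $(y_{0},\dots,y_{N})\mapsto(y_{0},\dots,y_{N-1},\,y_{N}-y_{0})$ shows that $(Y_{0},\dots,Y_{N-1},S_{N})$ has density
\begin{equation*}
\tfrac{1}{2\pi}\,h\bigl(s-(y_{N-1}-y_{0})\bigr)\prod_{i=1}^{N-1}h(y_{i}-y_{i-1})
\end{equation*}
in the variables $(y_{0},\dots,y_{N-1},s)$. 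The density of $S_{N}$, being an $N$-fold convolution of the bounded and strictly positive density $h$, is itself continuous and strictly positive, so a regular version of the conditional law of $(Y_{0},\dots,Y_{N-1})$ given $S_{N}=0$ is obtained by setting $s=0$ above and renormalizing; the resulting density is proportional to $\prod_{i=1}^{N}h(y_{i}-y_{i-1})$ with the convention $y_{N}:=y_{0}$. This is exactly the density of $\mu$ obtained in the first paragraph: in both vectors the first and last coordinate coincide, so each $(N+1)$-vector is determined by its first $N$ coordinates, and since both laws are probability measures the normalizing constants agree. This proves the proposition.

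As noted, the only genuine obstacle is the conditioning on the measure-zero event $\{S_{N}=0\}$; the disintegration argument handles it, and alternatively one may condition on $\{|S_{N}|<\delta\}$ and let $\delta\downarrow0$, using continuity of the densities involved. Everything else is bookkeeping: the factorization of the Boltzmann weight and changes of variables given by unipotent integer matrices on tori, which are measure preserving.
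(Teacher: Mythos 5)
Your proof is correct and follows essentially the same route as the paper: factorize the Boltzmann weight as $m^{N}\prod_{i}h(x_{i}-x_{i-1})$, compute the conditional density of the random-walk bridge, and match the two densities (equivalently, identify $\mathcal Z_{N}=2\pi h^{\star N}(0)\,m^{N}$). The only difference is that you spell out, via measure-preserving torus automorphisms and disintegration, the conditioning on the null event $\{S_{N}=0\}$, which the paper's proof writes down directly without comment.
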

\begin{proof}
Let $h^{\star N}$ denotes the $N$-th iterated convolution of $h$ with itself. For $(x_1,\dots,x_N)\in \spin^{N}$,
\begin{equation*}
\mathrm P\left(\Xi \in \dd x_1, \Xi+S_1 \in \dd x_2,\dots,\Xi+S_{N-1}\in \dd x_N \middle| S_N = 0 \right) = \frac{h(x_1-x_N)\prod_{i=1}^{N-1} h(x_{i+1}-x_i)}{2\pi h^{\star N}(0)},
\end{equation*}
while 
\[  \frac{\dd \mu}{\dd \lambda} (x) =
\frac{h(x_1-x_N)\prod_{i=1}^{N-1} h(x_{i+1}-x_i)}{\mZ_N} m^N.
\]
By integrating that $\mZ_N = 2\pi h^{\star N}(0) m^N$, from which we obtain the statement by equality of the last two displays.
\end{proof}
\begin{corollary} \label{cor:RWrepr}
The vector $(X_1-X_0,\dots,X_{N}-X_{N-1})$ under $\mu$ has the same law as $(\xi_1,\dots,\allowbreak \xi_N)$, conditionally on $\sum_{i=1}^N \xi_i=0$.
\end{corollary}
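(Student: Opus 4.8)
The plan is to read off the statement from Proposition \ref{prop:RWrepr} by pushing the identified law forward through the discrete-gradient map. Let $D\colon\spin^{N+1}\to\spin^N$ denote the continuous map
\[
D(x_0,x_1,\dots,x_N)=(x_1-x_0,\ x_2-x_1,\ \dots,\ x_N-x_{N-1}),
\]
so that $D(X_0,\dots,X_N)=(X_1-X_0,\dots,X_N-X_{N-1})$ is exactly the vector on the left-hand side of the statement, while on the random-walk side the common shift $\Xi$ cancels telescopically:
\[
D(\Xi,\ \Xi+S_1,\ \dots,\ \Xi+S_N)=(S_1,\ S_2-S_1,\ \dots,\ S_N-S_{N-1})=(\xi_1,\dots,\xi_N).
\]

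Next I would use that pushing a (regular) conditional law through a measurable map commutes with the conditioning: for a random vector $V$, an event $\mathcal E$ and a Borel set $A$, one has $\mathrm P(D(V)\in A\mid\mathcal E)=\mathrm P(V\in D^{-1}(A)\mid\mathcal E)$, i.e.\ the law of $D(V)$ given $\mathcal E$ is the $D$-pushforward of the law of $V$ given $\mathcal E$. Applying this with $V=(\Xi,\Xi+S_1,\dots,\Xi+S_N)$ and $\mathcal E=\{S_N=0\}$, Proposition \ref{prop:RWrepr} identifies $\mathcal L(V\mid\mathcal E)$ with the law of $(X_0,\dots,X_N)$ under $\mu$; applying $D$ to both and using the two displays above, $(X_1-X_0,\dots,X_N-X_{N-1})$ under $\mu$ has the law of $(\xi_1,\dots,\xi_N)$ conditioned on $\{S_N=0\}$. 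Since $S_N=\sum_{i=1}^N\xi_i$, that event is $\{\sum_{i=1}^N\xi_i=0\}$, which is the claim.

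I do not expect a genuine obstacle here; the one point worth flagging is that $\{S_N=0\}$ is Lebesgue-null, so ``conditioned on $\sum_i\xi_i=0$'' must be understood as the regular conditional distribution (a disintegration of the law of $(\xi_1,\dots,\xi_N)$ along $\sum_i\xi_i$) --- but this is precisely the object already pinned down in Proposition \ref{prop:RWrepr}, so nothing further is needed. One could instead argue directly with densities, but the image of $D$ lies in the codimension-one subtorus $\{\sum_i y_i=0\}\subseteq\spin^N$, so a direct computation would have to be carried out on that subtorus --- exactly the bookkeeping that the random-walk representation makes transparent.
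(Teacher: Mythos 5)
Your argument is correct and is exactly the route the paper intends: the corollary is stated without proof as an immediate consequence of Proposition \ref{prop:RWrepr}, obtained by applying the increment map $D$ (under which the common shift $\Xi$ cancels) to the conditional law identified there. Your remark that the conditioning on the null event $\{S_N=0\}$ is the disintegration already fixed by the density formula in Proposition \ref{prop:RWrepr} is the right way to read the statement, so nothing is missing.
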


\begin{corollary}\label{cor:WindNumber} Let $\rho_i$ be the representative of $\xi_i$ in $(-\pi,\pi)$ (i.e. $\rho_i = \widetilde{\xi_i}$)  thought of as a random variable in $\rr$. Let $g$ be the density of $\rho_i/2\pi$ with respect to the Lebesgue measure on $\mathbb{R}$. Then, for all $k\in \{-N,\dots,N\}$,
\begin{equation} \label{eq:probWegalk}
\mu\left(\windnum = k\right) =  \frac{g^{\star N}(k)}{ \sum_{k\in\mathbb Z} g^{\star N}(k)}.
\end{equation}
\end{corollary}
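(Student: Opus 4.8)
The plan is to push the identity of Corollary~\ref{cor:RWrepr} forward through the map defining the winding number. Since $\mu$ is absolutely continuous, $\windnum$ is $\mu$-a.s.\ well defined and equal to $\frac{1}{2\pi}\sum_{i=1}^{N}\widetilde{X_i-X_{i-1}}$; hence, by Corollary~\ref{cor:RWrepr}, the law of $\windnum$ under $\mu$ is the law, conditionally on $\sum_{i=1}^{N}\xi_i=0$, of
\[
  W:=\frac{1}{2\pi}\sum_{i=1}^{N}\widetilde{\xi_i}=\sum_{i=1}^{N}\frac{\rho_i}{2\pi}.
\]
The first step is to rewrite the conditioning: since $\sum_i\xi_i\equiv\sum_i\rho_i\pmod{2\pi}$, the event $\{\sum_i\xi_i=0\}$ (in $\spin$) is precisely $\{W\in\zz\}$ (in $\rr$), and on this event $\windnum=W$ is automatically an integer --- which re-proves the integrality noted after \eqref{eq:windnum}. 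So it remains to identify the conditional law of the real, absolutely continuous variable $W$, of density $g^{\star N}$, given that it is an integer.

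For this I would disintegrate over the $\spin$-valued total increment $S_N:=\sum_i\xi_i$, whose density is $h^{\star N}$: one has $\mathrm P(W=k\mid S_N=0)=\lim_{\delta\downarrow0}\mathrm P(|W-k|<\delta)\big/\mathrm P(\mathrm{dist}(S_N,0)<2\pi\delta)$, and for $\delta<1/2$ the denominator event is the disjoint union $\bigsqcup_{j\in\zz}\{|W-j|<\delta\}$ (this union being finite, since $\rho_i/2\pi\in(-1/2,1/2)$ forces $g^{\star N}$ to be supported in $[-N/2,N/2]$). Sending $\delta\to0$ and using continuity of $g^{\star N}$ at the integers then yields $\mathrm P(W=k\mid S_N=0)=g^{\star N}(k)\big/\sum_{j\in\zz}g^{\star N}(j)$, which is \eqref{eq:probWegalk}. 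The same conclusion can be reached by an explicit computation using the conditional density of Proposition~\ref{prop:RWrepr}: conditionally on $S_N=0$, the vector $(\xi_1,\dots,\xi_{N-1})$ has density proportional to $h\big(-\sum_{i<N}\xi_i\big)\prod_{i<N}h(\xi_i)$, so that $W$ equals the nearest integer to $\frac{1}{2\pi}\sum_{i<N}\widetilde{\xi_i}$; on $\{W=k\}$ one has $h(-\sum_{i<N}\xi_i)=\bar h\big(2\pi k-\sum_{i<N}\widetilde{\xi_i}\big)$ with $\bar h:=h\One_{(-\pi,\pi)}$, and integrating (the constraint $\{W=k\}$ being exactly the support constraint carried by this factor) gives $\mathrm P(W=k\mid S_N=0)=\bar h^{\star N}(2\pi k)/h^{\star N}(0)$; the scaling $g^{\star N}(t)=2\pi\,\bar h^{\star N}(2\pi t)$ and the wrapping identity $h^{\star N}(0)=\sum_{j\in\zz}\bar h^{\star N}(2\pi j)$ then convert this into the claimed formula.

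The computation is almost entirely bookkeeping. The one point requiring care is that conditioning on the null event $\{\sum_i\xi_i=0\}$ turns the absolutely continuous variable $W$ into an integer-valued one, so one must keep track of the normalising constants produced by wrapping the $\rr$-valued increment $\sum_i\rho_i$ onto $\spin$; the remaining inputs --- continuity and compact support of $g^{\star N}$, and $h>0$ so that $h^{\star N}(0)>0$ --- are immediate.
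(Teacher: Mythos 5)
Your proposal is correct and follows essentially the same route as the paper: reduce via Corollary~\ref{cor:RWrepr} to the law of $\frac{1}{2\pi}\sum_i\rho_i$ conditioned on $\{S_N=0\}=\{\sum_i\rho_i\in2\pi\zz\}$, then read off the conditional weights from the density $g^{\star N}$. The only difference is that you make explicit the step the paper leaves implicit (how conditioning on the null event produces the weights $g^{\star N}(k)/\sum_j g^{\star N}(j)$), both by the ratio-limit argument and by the exact computation with the conditional density of Proposition~\ref{prop:RWrepr}, which is a harmless and indeed welcome addition.
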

\begin{proof}
By Corollary \eqref{cor:RWrepr}, $\left(X_1-X_0,\dots,X_{N}-X_{N-1}\right)$ under $\mu$ has the law of $(\xi_1,\dots,\xi_{N})$ under the condition that $S_N=0$, so the result follows from the fact that $\windnum =\frac{1}{2\pi} \sum_{i=1}^N \widetilde{X_{i}-X_{i-1}} \eqlaw \frac{1}{2\pi}\sum_{i=1}^N \rho_i$ and the observation that 
\[
\{S_N = 0\}= \left\{\sum_{i=1}^N \rho_i \in 2\pi \mathbb{Z}\right\}.
\]
\end{proof}

The following moments estimate will prove useful:
\begin{proposition} \label{prop:msbeta}
As $\frac{J}{\sigma^2} \rightarrow \infty,$
\begin{align}
m & \sim 2 \sqrt{\pi} \, e^{\frac{J}{2\sigma^2}}\left(\frac{\sigma^2}{J}\right)^{1/2}, \label{eq:m}\\
s^2 \sim  2(2\pi)^{-2} & \left(\frac{\sigma^2}{J}\right),  \qquad \beta^3  \sim \frac{8(2\pi)^{-3}}{\sqrt \pi} \left(\frac{\sigma^2}{J}\right)^{3/2}. \label{eq:s}
\end{align}
where $s^2=\int y^2 g(y) \mathrm{d}y$ and $\beta^3 = \int |y|^3 g(y) \mathrm{d}y$.
\end{proposition}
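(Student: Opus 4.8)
The plan is to reduce all three asymptotics to a single Laplace-type estimate and then divide. Write $a:=\tfrac{J}{2\sigma^2}\to\infty$ and set
\[
A_\alpha:=\int_{-\pi}^{\pi}|x|^\alpha e^{a\cos x}\,\dd x\qquad(\alpha\ge 0).
\]
Then $m=A_0$ directly. Since $\xi_i$ has density $\tfrac1m e^{a\cos x}$ on $\spin$, its representative $\rho_i=\widetilde{\xi_i}$ has, as a real random variable, density $\tfrac1m e^{a\cos x}\One_{(-\pi,\pi)}(x)$, so $\rho_i/2\pi$ has density $g(y)=\tfrac{2\pi}{m}e^{a\cos(2\pi y)}\One_{(-1/2,1/2)}(y)$; the substitution $x=2\pi y$ then gives $s^2=\tfrac{A_2}{(2\pi)^2 m}$ and $\beta^3=\tfrac{A_3}{(2\pi)^3 m}$. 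Hence it suffices to prove
\[
A_0\sim\sqrt{2\pi}\,a^{-1/2}e^a,\qquad A_2\sim\sqrt{2\pi}\,a^{-3/2}e^a,\qquad A_3\sim 4\,a^{-2}e^a,
\]
after which \eqref{eq:m} and \eqref{eq:s} follow by arithmetic (using $a^{-1/2}=\sqrt2\,(\sigma^2/J)^{1/2}$ and the Gaussian moments $\int_\rr e^{-v^2/2}\dd v=\int_\rr v^2e^{-v^2/2}\dd v=\sqrt{2\pi}$, $\int_\rr|v|^3e^{-v^2/2}\dd v=4$).

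For the estimate on $A_\alpha$ I would apply Laplace's method around $x=0$, the unique maximum of $\cos$ on $[-\pi,\pi]$, where $\cos x=1-\tfrac{x^2}{2}+O(x^4)$. Fix a small $\delta>0$. On $\{|x|>\delta\}$ one has $\cos x\le\cos\delta<1$, so that part of $A_\alpha$ is $O(e^{a\cos\delta})=o(a^{-(1+\alpha)/2}e^a)$ and is discarded. On $\{|x|\le\delta\}$ substitute $x=v/\sqrt a$ and use $a\cos(v/\sqrt a)=a-\tfrac{v^2}{2}+O(v^4/a)$ to write
\[
\int_{|x|\le\delta}|x|^\alpha e^{a\cos x}\,\dd x=a^{-(1+\alpha)/2}e^a\int_{|v|\le\delta\sqrt a}|v|^\alpha e^{-v^2/2+O(v^4/a)}\,\dd v,
\]
and then split the $v$-integral at $|v|=a^{1/8}$: on $|v|\le a^{1/8}$ the error exponent is $O(a^{-1/2})$ uniformly, so the integral converges to $\int_\rr|v|^\alpha e^{-v^2/2}\dd v$; on $a^{1/8}<|v|\le\delta\sqrt a$ the integrand is, after shrinking $\delta$, at most $|v|^\alpha e^{-v^2/4}$, whose integral over $\{|v|>a^{1/8}\}$ tends to $0$. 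This yields $A_\alpha\sim a^{-(1+\alpha)/2}e^a\int_\rr|v|^\alpha e^{-v^2/2}\dd v$, which is exactly what is needed.

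The only real obstacle is the bookkeeping in the Laplace step: the cut-offs $\delta$ and $a^{1/8}$ must be chosen so that the Gaussian approximation holds with $o(1)$ relative error while the discarded regions stay genuinely negligible against $a^{-(1+\alpha)/2}e^a$, and this must be carried out keeping track of the constants, since we need the precise prefactors and not merely the exponential order. As an alternative one could recognize $m=2\pi I_0(a)$ and express $A_2,A_3$ through Bessel functions and their derivatives, then quote the classical large-argument asymptotics; but the direct computation above is self-contained and cheaper.
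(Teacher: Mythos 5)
Your proposal is correct and follows the same route as the paper, which simply invokes Laplace's method (citing a standard reference) for the asymptotics of $\int_{-\pi}^{\pi}|x|^\alpha e^{\frac{J}{2\sigma^2}\cos x}\,\dd x$; you carry out that Laplace argument explicitly, and the reduction of $s^2$ and $\beta^3$ to these integrals, the cut-offs, and the resulting constants (via $\int_\rr e^{-v^2/2}\dd v=\int_\rr v^2e^{-v^2/2}\dd v=\sqrt{2\pi}$ and $\int_\rr|v|^3e^{-v^2/2}\dd v=4$) all check out against \eqref{eq:m} and \eqref{eq:s}.
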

\begin{proof}
This is a direct application of Laplace's method. See, e.g., estimate (3) in \cite[p.\ 37]{E56}.
\end{proof}

\begin{proposition} \label{prop:estimateProbWeqk} Assume that $J/\sigma^2 \to\infty$ with $J/\sigma^2 =o(N)$. For all $k\in\zz$, there exist two positive constants $c,C$, such that for $N$ large enough and all $M\geq N/2$, for all $y\in [k-1,k+1]$,
\begin{equation} \label{eq:estimateGstarM}
c \frac{\sqrt J}{\sigma} M^{-1/2} \le g^{\star M}(y) \le C \frac{\sqrt J}{\sigma} M^{-1/2}.
\end{equation}
and moreover,
\begin{equation} \label{eq:LBonProbW}
c \frac{\sqrt J}{\sigma} N^{-1/2} \le \mu(\windnum=k) \le C \frac{\sqrt J}{\sigma} N^{-1/2}.
\end{equation}
\end{proposition}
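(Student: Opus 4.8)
The statement is a uniform local central limit theorem for the array whose increments $\rho_{1}/(2\pi)$ have the $N$-dependent density $g(y)=\tfrac{2\pi}{m}e^{b\cos(2\pi y)}\One_{(-1/2,1/2)}(y)$, where I abbreviate $b:=J/(2\sigma^{2})\to\infty$ (and $b=o(N)$). Once it is established, the constants are read off from Proposition~\ref{prop:msbeta}: $s^{2}\sim(2\pi)^{-2}b^{-1}$ by \eqref{eq:s}, so $s\asymp\sigma/\sqrt J$ and $(s\sqrt M)^{-1}\asymp\frac{\sqrt J}{\sigma}M^{-1/2}$, while \eqref{eq:LBonProbW} follows from \eqref{eq:estimateGstarM} at $M=N$ together with Corollary~\ref{cor:WindNumber}. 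The plan is Fourier inversion. Writing $\hat g(t):=\int_{\rr}e^{ity}g(y)\,\dd y$ for the characteristic function, one has $|\hat g(t)|\le\min\!\big(1,\,C\sqrt b/|t|\big)$ — the decay coming from one integration by parts in $\hat g$ (the boundary term being controlled by $g(\pm1/2)=\tfrac{2\pi}{m}e^{-b}$) together with $m\sim\sqrt{2\pi}\,e^{b}b^{-1/2}$ from \eqref{eq:m} — so $\hat g^{M}\in L^{1}(\rr)$ whenever $M\ge2$, and hence for $N$ large and $M\ge N/2$ the inversion formula $g^{\star M}(y)=\tfrac1{2\pi}\int_{\rr}e^{-ity}\hat g(t)^{M}\,\dd t$ holds pointwise. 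Substituting $t=u/(s\sqrt M)$, it suffices to show that $\int_{\rr}\big|\hat g\big(u/(s\sqrt M)\big)^{M}-e^{-u^{2}/2}\big|\,\dd u\to0$ uniformly over $M\ge N/2$: since $s\sqrt M\to\infty$ (as $s\asymp b^{-1/2}$ and $M/b\ge N/(2b)\to\infty$) and $|y|/(s\sqrt M)\to0$ for $y\in[k-1,k+1]$, this gives $g^{\star M}(y)=\tfrac1{s\sqrt M}\big(\tfrac1{\sqrt{2\pi}}+o(1)\big)$ uniformly, which is \eqref{eq:estimateGstarM}.

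For the convergence of the integral I would split $\rr$ into $\{|u|\le\eta_{N}\}$, $\{\eta_{N}<|u|\le\delta s\sqrt M\}$ and $\{|u|>\delta s\sqrt M\}$, with $\eta_{N}\to\infty$ slowly (say $\eta_{N}=\log N$) and $\delta>0$ a small fixed constant; the three pieces rest on three facts about $\hat g$. First, near the origin $\hat g(t)=1-\tfrac{s^{2}t^{2}}2+O(\beta^{3}|t|^{3})$ with $\beta^{3}/s^{3}=O(1)$ by Proposition~\ref{prop:msbeta}; hence on the first piece $M\log\hat g\big(u/(s\sqrt M)\big)=-\tfrac{u^{2}}2+O(|u|^{3}/\sqrt M)$ and the contribution is $O(M^{-1/2})\int_{\rr}|u|^{3}e^{-u^{2}/2}\,\dd u\to0$. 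Second, the elementary inequality $1-\cos z\ge\tfrac2{\pi^{2}}z^{2}$ for $|z|\le\pi$ gives $1-\hat g(t)\ge\tfrac{2t^{2}}{\pi^{2}}\int_{|y|\le\pi/|t|}y^{2}g(y)\,\dd y$; since the rescaled increment $\rho_{1}/(2\pi s)$ has second moment $1$ and, by Laplace's method as in Proposition~\ref{prop:msbeta}, bounded fourth moment, one gets $\int_{|y|\le\pi/|t|}y^{2}g\ge\tfrac12 c\,s^{2}$ uniformly in $N$ as long as $|t|\le K\sqrt b$ for a fixed $K$ (with $c=c(K)>0$), whence $\hat g(t)\le e^{-cs^{2}t^{2}}$ on $|t|\le K\sqrt b$; on the second piece this yields $\hat g\big(u/(s\sqrt M)\big)^{M}\le e^{-cu^{2}}$ (note $\hat g>0$ on $|t|\le\delta$ for $N$ large) and the contribution is $\le\int_{|u|>\eta_{N}}\!\big(e^{-cu^{2}}+e^{-u^{2}/2}\big)\,\dd u\to0$. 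Third, fixing $K:=2C$ gives $|\hat g(t)|\le1/2$ for $|t|>K\sqrt b$; on the third piece the $e^{-u^{2}/2}$ part vanishes ($\delta s\sqrt M\to\infty$) while the $|\hat g|^{M}$ part equals $s\sqrt M\int_{|t|>\delta}|\hat g(t)|^{M}\,\dd t$, which is $\le\int_{|v|>\delta s\sqrt M}e^{-cv^{2}}\,\dd v\to0$ on $\delta<|t|\le K\sqrt b$ and $\le s\sqrt M(C\sqrt b)^{M}\!\int_{K\sqrt b}^{\infty}t^{-M}\,\dd t\lesssim\sqrt M\,2^{-M}\to0$ on $|t|>K\sqrt b$. (Minor caveat: where $\hat g$ dips below $0$, which can happen only for $|t|\asymp\sqrt b$, one uses $|\hat g(t)|\le1-c'$ there, contributing $\lesssim\sqrt M(1-c')^{M}\to0$.) All suprema over $M\ge N/2$ are attained at $M=N/2$ for $N$ large, so the bounds are uniform.

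For \eqref{eq:LBonProbW}: by Corollary~\ref{cor:WindNumber}, $\mu(\windnum=k)=g^{\star N}(k)\big/\sum_{j\in\zz}g^{\star N}(j)$, and the numerator is $\asymp\frac{\sqrt J}{\sigma}N^{-1/2}$ by \eqref{eq:estimateGstarM} with $M=N$. The denominator tends to $1$: $g$ is even and unimodal (on $(-1/2,1/2)$ it equals $\tfrac{2\pi}{m}e^{b\cos 2\pi y}$, strictly decreasing in $|y|$), a property preserved under convolution, so $g^{\star N}$ is even and unimodal; comparing the sum over $\zz$ with $\int_{\rr}g^{\star N}=1$ via monotonicity on each side of $0$ gives $1-g^{\star N}(0)\le\sum_{j\in\zz}g^{\star N}(j)\le1+g^{\star N}(0)$, and $g^{\star N}(0)\asymp\frac{\sqrt J}{\sigma}N^{-1/2}\to0$ since $J/\sigma^{2}=o(N)$.

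The step I expect to be the main obstacle is the one behind the second and third facts above: because $g$ concentrates near $0$ as $J/\sigma^{2}\to\infty$, the usual bound ``$\sup_{|t|\ge\delta}|\hat g(t)|\le1-\varepsilon$'' is false, and one must instead quantify how slowly $\hat g$ approaches $1$ away from the origin — it behaves like $e^{-s^{2}t^{2}/2}$ with $s^{2}\asymp b^{-1}$ — and play this off against the divergence $M/b\ge N/(2b)\to\infty$. Equivalently, after rescaling the increments by $s$ the characteristic functions become uniformly regular in $N$, but proving the sharp uniform estimates ($\hat g(t)\le e^{-cs^{2}t^{2}}$ out to $|t|\asymp\sqrt b$, and polynomial decay beyond) and then propagating uniformity through all $M\ge N/2$ is the real work; by contrast the near-origin analysis is routine once one knows $\beta^{3}/s^{3}=O(1)$, which Proposition~\ref{prop:msbeta} provides.
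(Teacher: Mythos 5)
Your argument is correct in substance but follows a genuinely different route from the paper. The paper does not redo the Fourier analysis: it invokes a ready-made \emph{uniform} local limit theorem for densities (the reference cited before \eqref{eq:LLT}), whose error term is explicit in the moments $s,\beta$ and in $A=\sup g$, and then simply checks via Proposition \ref{prop:msbeta} that $\beta^9\max(1,A^5)/s^5=O(1/s)$, so that the error is $O\bigl((sM)^{-1}\bigr)$ uniformly in $y$; the normalization $\sum_{k\in\zz}g^{\star N}(k)\to1$ is then obtained by summing this error over $k$ and a Riemann-sum argument, rather than by your unimodality comparison (which, incidentally, is exactly Lemma \ref{lem:convolution} of the paper, proved there for the low-temperature part of Theorem \ref{thm:cltforw}, so you could cite it instead of reproving preservation of unimodality). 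What your route buys is self-containedness: you reprove the needed uniform LLT for this triangular array by inversion, with the three-zone split and the characteristic-function bounds $\hat g(t)\le e^{-cs^2t^2}$ up to $|t|\asymp\sqrt{J/\sigma^2}$ and $|\hat g(t)|\le C\sqrt{J/\sigma^2}/|t|$ beyond; what the paper's route buys is brevity and a quantitative error bound \eqref{eq:LLT} that is reused verbatim in the proof of Theorem \ref{thm:cltforw}. One point in your write-up deserves tightening: the lower bound $\int_{|y|\le\pi/|t|}y^2g\ge\tfrac12 c\,s^2$ for $|t|\le K\sqrt b$ does not follow from ``second moment $s^2$ and bounded fourth moment'' alone, because the truncation radius is a \emph{fixed} multiple $2\pi^2/K$ of $s$ determined by the far-field constant, and the Chebyshev-type estimate $\int_{|y|>\lambda s}y^2g\le Cs^2/\lambda^2$ is only useful when $\lambda$ is large compared with that $C$. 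The clean fix is the one already implicit in your Laplace-method remarks: the rescaled density $s\,g(s\,\cdot)$ converges to $\varphi$ uniformly on compacts (and in $L^1$), which gives the truncated second-moment lower bound for any fixed $\lambda>0$, and also shows $\hat g>0$ throughout $|t|\le K\sqrt b$ for $N$ large, eliminating the ``$\hat g$ dips below zero'' caveat rather than patching it.
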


\begin{proof} By equation \eqref{eq:probWegalk}, we see that it is enough to have a good control on the asymptotic behavior of $g^{\star M}$, which could be obtained via a local limit theorem for densities. However, we want the constants $c,C$ not to depend on $J,\sigma$, so we should beware of the error term in the local limit approximation. We therefore rely on the result of \cite{uniformllt} (see also \cite{KoZh98} and references therein). Letting $s,\beta$ are as in Proposition \ref{prop:msbeta} and $A=\sup g=\frac{2\pi}{m}e^{\frac{J}{2\sigma^2}}$, we obtain that there exists a universal constant $C>0$, such that for all $y\in \mathbb R$, 
\begin{align}\label{eq:LLT}
\left|g^{\star M}(y) - \frac{1}{s \sqrt M} \varphi\left(\frac{y}{s \sqrt M}\right) \right| \leq \frac{C}{s^5 M} \frac{ \beta^9 \max(1,A^5)}{1+\frac{y^2}{s^2 M}} \leq \frac{C}{s M} \frac{1}{1+\frac{y^2}{s^2 M}},
 \end{align}
where the second inequality follows from proposition \ref{prop:msbeta}.

Since $s^2N\to\infty$, estimate  \eqref{eq:estimateGstarM} is a direct consequence of equation \eqref{eq:LLT}. Moreover, setting $M=N$, the RHS of equation \eqref{eq:LLT} is summable in $y=k\in\mathbb Z$ with vanishing sum when $N\to\infty$.
Furthermore $\frac{1}{s \sqrt N} \sum_{k\in\mathbb Z} \varphi\left(\frac{k}{s \sqrt N}\right)\to \int \varphi = 1$, therefore $\sum_{k\in\mathbb Z} g^{\star N}(k) \to 1$ and so equation \eqref{eq:LBonProbW} follows from Corollary \ref{cor:WindNumber}.
\end{proof}

We can use the tools from the proof of the previous proposition to show Theorem \ref{thm:cltforw}:
\begin{proof}[Proof of Theorem \ref{thm:cltforw}]
We begin with the case $J/\sigma^2 =o(N)$. Relying on equation \eqref{eq:probWegalk}, we see from equation \eqref{eq:LLT} and $\sum_{k\in\zz}g^\star (k) \rightarrow 1$ (cf. proof of Proposition \ref{prop:estimateProbWeqk}) that the following local limit estimate for $\windnum$ holds:
\begin{equation} \label{eq:LLTforWind}
\mu (\windnum = k) = (1+\delta_N) \frac{1}{s\sqrt N}\varphi\left(\frac{k}{s\sqrt N}\right)+\alpha_k^N,
\end{equation}
where $\delta_N \to 0$ and $\left| a_k^N \right| \le C (sN)^{-1} \left( 1+\frac{k^2}{s^2 N} \right)^{-1}$. From the local limit estimate for $\windnum$, we obtain via integration over a test function and the Riemann sum properties that
\begin{equation*} (s^2 N)^{-1/2} \, \windnum^{(N)} \overset{(d)}{\longrightarrow} \mathcal N(0,1).
\end{equation*}

For the case $N=o(J/\sigma^2)$, we will use the following lemma:
\begin{lem}\label{lem:convolution}
Let $f_1$ and $f_2$ be two even functions from $\rr$ to $\rr$, non-decreasing on $(-\infty,0]$ and non-increasing on $[0,\infty)$. Then their convolution $f_1 \star f_2$ is also even, non-decreasing on $(-\infty,0]$ and non-increasing on $[0,\infty)$.
\end{lem}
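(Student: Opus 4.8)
The plan is to handle evenness by a direct change of variables and then reduce the monotonicity statement to a single inequality, which is dispatched by a reflection argument. Throughout I take $f_1,f_2$ to be non-negative and integrable, as is the case in the application where they are probability densities; then $\psi:=f_1\star f_2$ is well-defined and finite, and moreover each $f_i$ is automatically bounded by its value $f_i(0)$ at the origin. For evenness, write $\psi(x)=\int_\rr f_1(x-y)f_2(y)\,\dd y$ and substitute $y\mapsto-y$; the evenness of $f_1$ and of $f_2$ then gives $\psi(-x)=\psi(x)$. By evenness it suffices to show that $\psi$ is non-increasing on $[0,\infty)$, i.e.\ that $\psi(a)\ge\psi(b)$ for all $0\le a<b$.

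Fix such $a<b$, set $c=(a+b)/2>0$, and put $D(y)=f_1(a-y)-f_1(b-y)$, so that $\psi(a)-\psi(b)=\int_\rr D(y)f_2(y)\,\dd y$. The key point is the reflection identity $D(2c-y)=-D(y)$, which follows from $2c=a+b$ and the evenness of $f_1$. Splitting $\int_\rr=\int_{-\infty}^c+\int_c^\infty$ and applying the substitution $y\mapsto 2c-y$ to the second piece folds the expression into
\begin{equation*}
\psi(a)-\psi(b)=\int_{-\infty}^{c} D(y)\,\bigl(f_2(y)-f_2(2c-y)\bigr)\,\dd y .
\end{equation*}
I then check that both factors are non-negative on $(-\infty,c)$: for $y\le c$ one has $|a-y|^2-|b-y|^2=(a-b)(a+b-2y)\le0$, so $|a-y|\le|b-y|$ and hence, by evenness and monotonicity of $f_1$, $D(y)\ge0$; and for $y\le c$ one also has $|y|\le|2c-y|$ (using $c>0$), so by evenness and monotonicity of $f_2$, $f_2(y)\ge f_2(2c-y)$. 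The integrand is therefore non-negative, giving $\psi(a)\ge\psi(b)$.

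Since this is essentially the classical fact that convolution preserves symmetric unimodality, there is no real obstacle; the only things needing a little care are the integrability bookkeeping that legitimizes the splitting and the change of variables --- here $|D(y)|f_2(y)\le f_2(0)\bigl(f_1(a-y)+f_1(b-y)\bigr)\in L^1$ --- and the choice of the cleanest route. An alternative to the reflection trick is the layer-cake argument: write each $f_i$ as a superposition of indicators of centred intervals and use that $\One_{[-p,p]}\star\One_{[-q,q]}$ is a symmetric trapezoid, hence even and unimodal, so that $f_1\star f_2$ inherits these properties by positivity of the outer integrals; but the reflection argument above is shorter.
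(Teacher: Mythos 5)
Your proof is correct and follows essentially the same route as the paper's: split the integral at the midpoint $(a+b)/2$, reflect the outer piece via $y\mapsto a+b-y$, and conclude from the two monotonicity inequalities $f_1(a-y)\ge f_1(b-y)$ and $f_2(y)\ge f_2(a+b-y)$ on the half-line. Folding everything into a single integral of a product of two non-negative factors is only a cosmetic repackaging of the paper's termwise comparison of the two half-line integrals.
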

Before proving the lemma, we will use it in order to deduce the second part of the theorem. Consider a random variable $Z_N$ whose law is given by $g^{\star N}$. Its variance is $N s_N^2$, so by equation \eqref{eq:s} it converges to $0$. Hence the probability that $Z_N$ is in $(-1/2,1/2)$ is equal $1-\varepsilon_N$ for a sequence $\varepsilon_N\to 0$. Since by Lemma \ref{lem:convolution} the function $g^{\star N}$ is maximal at $0$, necessarily $g^{\star N}(0) > 1-\varepsilon_N$. On the other hand, again using Lemma \ref{lem:convolution}, for all $x>1/2$ 
\[
\sum_{k=1}^N g^{\star N}(k)\One_{(k-1/2,k)}(x)<g(x),
\]
and integrating both sides yields $\sum_{k=1}^N g^{\star N}(k) < 2 \int_{1/2}^\infty g(x)\dd x=\varepsilon_N$. The same estimate holds when summing over negative values of $k$, so $\sum_{k\neq 0}g^{\star N}(k) \to 0$. This concludes the proof by Corollary \ref{cor:WindNumber}.
\end{proof}

\begin{proof} [Proof of Lemma \ref{lem:convolution}]
Convolution of even functions is even, so it suffices to prove that, for any fixed $0<x<y$,
\begin{equation}\label{eq:convolution_decreasing}
f_1 \star f_2 (x) - f_1 \star f_2 (y) = \int_\rr (f_1(x-t)-f_1(y-t))f_2(t)\dd t \ge 0. 
\end{equation}
We will split the integral over $\rr$ into the two intervals $(-\infty,\frac{x+y}{2}]$ and $[\frac{x+y}{2},\infty)$. By the change of variables $s=x+y-t$ the second could be rewritten as
\begin{equation}
\int \limits_{[\frac{x+y}{2},\infty)} (f_1(x-t)-f_1(y-t))f_2(t)\dd t = \int\limits_{(-\infty,\frac{x+y}{2}]} (f_1(y-s)-f_1(x-s))f_2(x+y-s)\dd s.
\end{equation}
Fix $s<\frac{x+y}{2}$. If $s\ge 0$ then, since $x+y-s>s>0$, we know that $f_2(x+y-s)\le f_2(s)$. The same inequality holds also when $s < 0$ -- in that case, $x+y-s>-s>0$, so indeed $f_2(x+y-s)\le f_2(-s) = f_2 (s)$. Similar reasoning shows that $f_1(y-s)\le f_1(x-s)$ -- when $s<x$ this follows from the fact that $0<x-s<y-s$, and when $s\ge x$ we use $0<s-x<y-s$. To summarize,
\begin{equation}
\forall s \in (-\infty,\frac{x+y}{2}], \qquad f_2(x+y-s)\le f_2(s) \quad \text{ and } \quad f_1(y-s)-f_1(x-s)\le 0.
\end{equation}
We can now prove inequality \eqref{eq:convolution_decreasing}:
\begin{multline*}
f_1 \star f_2 (x) - f_1 \star f_2 (y) = \\
\int \limits_{(-\infty,\frac{x+y}{2}]} (f_1(x-t)-f_1(y-t))f_2(t)\dd t + 
\int\limits_{(-\infty,\frac{x+y}{2}]} (f_1(y-s)-f_1(x-s))f_2(x+y-s)\dd s \\
\ge \int \limits_{(-\infty,\frac{x+y}{2}]} (f_1(x-t)-f_1(y-t))f_2(t)\dd t + \int\limits_{(-\infty,\frac{x+y}{2}]} (f_1(y-s)-f_1(x-s))f_2(s)\dd s = 0.
\end{multline*}

\end{proof}

\section{Lower bound}
In this section we focus on showing the lower bound in equation \eqref{eq:lowerBound} on the time needed for the winding number to change.  Our main tool is the following lemma, in the spirit of \cite[Chapter 5]{OlivieriVares}. In the following, for $x=(x_1,\dots,x_N)\in \spin^N$, we denote by $\norm{x}_\infty=\max \{ |\tilde{x}_i|: i\in [N] \}$.

\begin{lem} \label{lm:LemmaForlowerBound}
Fix two events $A,B$ on $\spin^{N}$ and $\delta>0$. Let
$A_{\delta}$ be the $\delta$-thickening of $A$ in $B$, that is,
\[
A_{\delta}=\left\{ x:\norm{x-y}_{\infty}<\delta\text{ for some }y\in A\right\} \cap B.
\]
Assume that $\tau_A \le \tau_{B^c}$ $\pp_{\mu|B}$-a.s.\ and that there is a sequence $T=T(N)$ such that
\begin{align} \label{eq:asumpt1TimestepsLemma}
{J/\sigma^2 \leq C(\log T + \log N)\to\infty;} \quad \sigma =O((NT)^\alpha),
\end{align}
for all $\alpha > 0$, as $N\to\infty$.
Let
\[\alpha_N := \sigma^2 T(\log T + \log N)\frac{\mu(A_\delta)}{\mu(B)}.\]
Then, for $\delta$ small enough, 
\[
\pp_{\mu|B}\left(\tau_{A}<T\right) = O(\alpha_N)
\]
as $N\to\infty$.
\end{lem}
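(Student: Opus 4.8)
The plan is to discretize time into short steps of length $\theta=\theta(N)$ and, on each step, to split the event $\{\tau_A<T\}$ into the part where the process is \emph{already near $A$} at the beginning of the step --- which I would control by stationarity of $\mu$ --- and the part where the process makes an \emph{anomalously large displacement} during the step --- which I would control by Gaussian concentration of the trajectory around its drift. Set $n=\floor{T/\theta}+1$ and $t_j=j\theta$, so that $[0,T)\subseteq\bigcup_{j=0}^{n-1}[t_j,t_{j+1})$. The basic observation is that if $\tau_A\in[t_j,t_{j+1})$ then, since $\pp_{\mu|B}$-a.s.\ the hitting time $\tau_A$ has no atom (so it avoids the finite grid $\{t_0,\dots,t_n\}$) and $\tau_A\le\tau_{B^c}$ a.s., we have $t_j<\tau_A\le\tau_{B^c}$, hence $X(t_j)\in B$, whereas $X(\tau_A)\in\overline A$. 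Thus either $X(t_j)\in A_\delta$, or $\norm{X(t_j)-y}_\infty\ge\delta$ for all $y\in A$ and therefore $\sup_{s\in[t_j,t_j+\theta]}\norm{X(s)-X(t_j)}_\infty\ge\delta$. This gives
\[
\pp_{\mu|B}(\tau_A<T)\ \le\ \sum_{j=0}^{n-1}\pp_{\mu|B}\big(X(t_j)\in A_\delta\big)\ +\ \sum_{j=0}^{n-1}\pp_{\mu|B}\Big(\sup_{s\in[t_j,t_j+\theta]}\norm{X(s)-X(t_j)}_\infty\ge\delta\Big).
\]

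For the first sum I would use that $\mu$ is stationary and that conditioning on $B$ costs at most a factor $\mu(B)^{-1}$: $\pp_{\mu|B}(X(t_j)\in A_\delta)=\mu(B)^{-1}\pp_\mu(X_0\in B,\,X(t_j)\in A_\delta)\le\mu(A_\delta)/\mu(B)$, so the first sum is at most $n\,\mu(A_\delta)/\mu(B)$. For the second sum I would condition via the Markov property at $t_j$ and estimate the one-step displacement uniformly over the starting point: passing to the $\rr^N$-valued lift of \eqref{eq:ito} and using that each coordinate of $\grad H_N$ equals $J_N\big(\sin(x_i-x_{i-1})-\sin(x_{i+1}-x_i)\big)$, whence $\norm{\grad H_N}_\infty\le 2J_N$, one has $\sup_{s\le\theta}\norm{X(s)-x}_\infty\le 2J_N\theta+\sigma_N\max_i\sup_{s\le\theta}\big|B_i^{(N)}(s)\big|$ for every $x$. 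Imposing $2J_N\theta\le\delta/2$ turns the displacement event into the event that some Brownian coordinate exceeds $\delta/(2\sigma_N)$ over $[0,\theta]$, which by the reflection principle and a union bound over the $N$ coordinates has probability at most $2N\exp\!\big(-\delta^2/(8\sigma_N^2\theta)\big)$; hence the second sum is at most $2nN\exp\!\big(-\delta^2/(8\sigma_N^2\theta)\big)$.

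It then remains to choose $\theta$, and I would take $\theta=\kappa\big(\sigma_N^2(\log T+\log N)\big)^{-1}$ with $\kappa>0$ a small constant proportional to $\delta^2$. The hypothesis $J/\sigma^2\le C(\log T+\log N)$ from \eqref{eq:asumpt1TimestepsLemma} then makes $2J_N\theta\le\delta/2$ automatic once $\kappa$ is small enough, while $n\le T\sigma_N^2(\log T+\log N)/\kappa+1$, so the first sum equals $\kappa^{-1}\alpha_N$ up to lower-order terms and is therefore $O(\alpha_N)$ (with a constant allowed to depend on $\delta$). On the other hand $\exp\!\big(-\delta^2/(8\sigma_N^2\theta)\big)=(NT)^{-\delta^2/(8\kappa)}$ with the exponent as large as we wish, and since $\sigma_N=O((NT)^\alpha)$ for all $\alpha>0$ the prefactor $2nN$ is of size $(NT)^{2+o(1)}$; hence, with $\kappa$ small, the second sum is smaller than any prescribed power of $NT$ and is negligible compared with $\alpha_N$ (in the regime of application $J/\sigma^2\asymp\log(NT)$, so $e^{-J/\sigma^2}$ and the error factors, and hence $\mu(A_\delta)/\mu(B)$, are only polynomially small in $NT$). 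Combining the two bounds gives $\pp_{\mu|B}(\tau_A<T)=O(\alpha_N)$.

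I expect the main obstacle to be exactly this simultaneous choice of $\theta$: it must be large enough that the number of steps $n\asymp T/\theta$ does not inflate the stationarity term past the target $\alpha_N$, yet small enough both to meet the drift constraint $2J_N\theta\le\delta/2$ and to force the Gaussian escape probability to decay as a large power of $NT$. That a single $\theta$ can do all three is precisely what the scaling assumptions \eqref{eq:asumpt1TimestepsLemma} provide: $J/\sigma^2=O(\log(NT))$ keeps the drift controlled at the natural step size $\theta\sim\big(\sigma^2\log(NT)\big)^{-1}$ and makes all error factors merely polynomial in $NT$, while $\sigma_N$ being sub-polynomial keeps the combinatorial prefactor $nN$ of size $(NT)^{2+o(1)}$. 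The few remaining points --- that $\tau_A$ has no atoms (so $X(t_j)\in B$ on the relevant event), that $\overline A$ may replace $A$ in the distance inequality, and the uniformity in the starting point of the reflection-principle estimate --- are routine.
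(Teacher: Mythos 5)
Your proposal is correct and follows essentially the same route as the paper: discretize $[0,T]$ into steps of length $\asymp\delta^2/(\sigma^2(\log T+\log N))$, bound the probability that the driving noise (plus the drift, controlled by $J\theta\lesssim\delta$ via \eqref{eq:asumpt1TimestepsLemma}) moves any coordinate by more than order $\delta$ within a step using the reflection principle and a union bound, and bound the probability of being in $A_\delta$ at a grid point by $n\,\mu(A_\delta)/\mu(B)$ via stationarity. The only cosmetic difference is your appeal to $\tau_A$ having no atoms to guarantee $X(t_j)\in B$; the paper sidesteps this by taking the largest grid point strictly less than $\tau_A$, which is the cleaner fix of the "routine point" you flag.
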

\begin{proof}
The strategy is to divide time between $[0,T]$ as follows:
\[\Delta = \frac{\delta^2}{80\sigma^2 (\log T + \log N)},\quad 
t_j=\Delta\,j \quad \text{for } 0\le j\le n=\left\lceil \frac{T}{\Delta}\right\rceil.
\]
In order to control the increment of the $X_i$'s during a timelap $\Delta$, we start by defining a "bad event" $\Omega$ where the driving noise varies too much:
\begin{equation*}
\Omega =\left\{ \max_{0\le j\le n}\,\sup_{t_{j}\le t\le t_{j+1}}\sigma\norm{B(t)-B(t_{j})}_{\infty}\ge \frac{\delta}{4}\right \}
\end{equation*}
On its complement $\Omega^{c}$, for all $j\in\left[0,n\right]$, $t\in\left[t_{j},t_{j+1}\right]$ and $i<N$,

\begin{align*}
\left|X_{i}\left(t\right)-X_{i}\left(t_{j}\right)\right| & \le\int\limits _{t_{j}}^{t}\left|\frac{\partial H}{\partial x_i}(X_s)\right|\text{d}s+\sigma\left|B_{i}(t)-B_i(t_j)\right|\\
 & \le J\,\Delta+ \frac{\delta}{4} \leq \frac{\delta}{2},
\end{align*}
where the last equality comes from assumption \eqref{eq:asumpt1TimestepsLemma} of the lemma and choosing $\delta$ small enough. On the event $\Omega^c \cap\{\tau_A \leq \tau_{B^c};\tau_A< T\}$, the largest $t_j$ strictly less than $\tau_A$ satisfies $X_{t_j}\in A_\delta$ (if $\tau_A = 0$, set $t_j = 0$).
Therefore, as $\Omega$ is independent of the initial condition of $X$ and since we assume that $\tau_A \leq \tau_{B^c}$,
\begin{align*}
\pp_{\mu|B}\left(\tau_{A}<T\right) &\leq \pp(\Omega) + \pp_{\mu|B}\left(\Omega^c \cap \{\tau_{A}<T\}\right),\\
& \leq \pp(\Omega) + \pp_{\mu|B}\left(\cup_{j\leq n} \{X(t_j)\in A_\delta \}\right).
\end{align*}

By the union bound, the reflection principle for Brownian motion on $\mathbb R$ and the standard Gaussian tail estimate $\int_{u}^\infty e^{-z^2/2} \dd z \leq u^{-1} e^{-u^2/2}$,
\begin{align*}
\pp\left(\Omega\right) & \le 2N\left(n+1\right)\frac{4\Delta^{1/2} \sigma}{\delta} e^{-\frac{\delta^{2}}{32\sigma^{2}\Delta}}\\
&\leq C e^{\log N + \log T} \frac{\sigma}{\delta \Delta^{1/2}} e^{-\frac{\delta^{2}}{32\sigma^{2}\Delta}}\\
& = C' e^{-\frac{3}{2}(\log N+\log T)} (\log N + \log T)^{1/2} \frac{\sigma^2}{\delta^2} \to 0,
\end{align*}
where the convergence is assured by \eqref{eq:asumpt1TimestepsLemma}.

Now, by stationarity, for any $t\geq 0$,
\[
\pp_{\mu|B}\left(X_{t}\in A_{\delta}\right)\leq\frac{\pp_{\mu}\left(X_{t}\in A_{\delta}\right)}{{\pp_{\mu}\left(X_{0}\in B\right)}}=\frac{\mu\left(A_{\delta}\right)}{\mu\left(B\right)},
\]
so in particular,
\begin{align*}
\pp_{\mu|B}\left(X_{t_{j}}\in A_{\delta}\text{ for some }j\right) & \le n\,\frac{\mu\left(A_{\delta}\right)}{\mu\left(B\right)} \\
 & \le\frac{80\sigma^{2}\left(\log T+\log N\right)}{\delta^{2}}T\,\frac{\mu\left(A_{\delta}\right)}{\mu\left(B\right)}\\
 & =O(\alpha_N).
\end{align*}
\end{proof}

\noindent\textbf{Conclusion of the proof of Theorem \ref{th:mainTheorem}.}
Let $B=\left\{ \windnum=k\right\} $ and $A$ be the event 
\[
A=\left\{ \exists i\text{ such that }X_{i}-X_{i-1}=\pi\right\} .
\]
We will estimate the probability of its thickening. By Proposition \ref{prop:estimateProbWeqk}, for $N$ large enough and our assumptions on $J/\sigma^2$,
\begin{align*}
&\mu\left(X_{1}-X_{0}\in\left(\pi-\delta,\pi+\delta\right)|\windnum=k\right) 
 = \int_{\left[\frac{\pi-\delta}{2\pi},\frac{1}{2}\right]\cup \left[\frac{-1}{2},\frac{-\pi+\delta}{2\pi}\right]}\frac{g(y)g^{\star (N-1)}(k-y)}{g^{\star N}(k)}\text{d}y\\ &\leq C \int_{\left[\frac{\pi-\delta}{2\pi},\frac{1}{2}\right]\cup \left[\frac{-1}{2},\frac{-\pi+\delta}{2\pi}\right]} g(y)\dd y \leq C\frac{\sqrt{J}}{\sigma}e^{-J/\sigma^{2}\,\left(2-\frac{1}{2}\delta^{2}\right)}.
\end{align*}
Hence, 
\[
\mu(A_{\delta})<C\frac{\sqrt{J}}{\sigma}e^{-\frac J {2\sigma^{2}}\,\left(2-\frac{1}{2}\delta^{2}\right)+\log N}\mu\left(B\right).
\]
Letting
\[
T=e^{\frac{J}{2\sigma^{2}}\,\left(2-\delta^{2}\right)-\log N},
\]
we can estimate $\alpha_N$ of Lemma \ref{lm:LemmaForlowerBound} using our assumptions on $J,\sigma^2$:
\[
\alpha_N \leq \sigma^{2}\,T\left(\log T+\log N\right)\frac{\sqrt{J}}{\sigma}e^{-\frac{J}{2\sigma^{2}}\,\left(2-\frac{1}{2}\delta^{2}\right)+\log N}
=(1-\delta^{2}/2)\,\frac{J^{3/2}}{\sigma}e^{-\frac{\delta^{2}}{4}\,J/\sigma^{2}}\to 0.
\]
Since the winding number changes only when we hit $A$, if
we start in $\windnum=k$ necessarily $\tau_{A}=\tau_{\left\{ \windnum\neq k\right\} }$. We may therefore use Lemma \ref{lm:LemmaForlowerBound} concluding that 
\[
\pp_{\mu|k}\left(\tau_{k\pm 1}<T\right)\to 0.
\]
\qed

\section{Upper bound}
\subsection{Proof of the upper bound for the exit time (Theorem \ref{th:mainTheorem2})}
\label{sec:proofOfprop:upperBTauA}
The upper bound will use certain spectral properties of the infinitesmal generator of the process.
Recall that the generator associated with the process defined in equation \eqref{eq:ito} and its associated Dirichlet form, acting on a smooth function $f$, are given by
\begin{align}
Lf =& -\grad H \grad f + \frac{\sigma^2}{2} \Laplacian f,\\
\mathcal{E} f =& \mathcal{E}(f,f) = \frac{1}{2} \mu\left((\grad f)^2\right).
\end{align}

Fix a closed set $A\subset \spin^N$. The next lemma is a result of \cite{AdP}, which bounds $\tau_A$ from above.
They consider the discrete case of an interacting particle system; but there is no essential difference between this case and the continuous diffusion equation that we are interested in. There are, however, several technical complications that appear, detailed in Appendix \ref{sec:AdPproof}. Among these complications is a certain regularity condition on the set $A$. For the sake of this discussion, it suffices to assume that its complement $A^c$ satisfies the Poincar\'e cone condition, i.e., that from each point $x$ on the boundary of $A^c$ one can find a cone which is entirely contained in the interior of $A$.

Let $C^\infty_c(A^c)$ be the set of smooth function on $\spin^N$ that are compactly supported in $A^c$.
\begin{lem}\label{lem:AdP}
Let $\lambda>0$, and assume that for all $f\in C_c^\infty(A^c)$,
\begin{equation}\label{eq:AdP_ineq}
\mathcal{E}f \ge \lambda \mu(f^2).
\end{equation}
Then $\pp_\mu(\tau_A>t)\le e^{-\lambda t}$ for all $t>0.$
\end{lem}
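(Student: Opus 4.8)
The plan is to relate the hitting time $\tau_A$ to the Dirichlet semigroup killed on entering $A$, and to use the spectral gap hypothesis \eqref{eq:AdP_ineq} to get exponential decay. Concretely, let $(P_t^{A^c})_{t\ge 0}$ denote the semigroup of the process killed upon hitting $A$, acting on $L^2(\mu)$; its generator is the Friedrichs extension of $L$ restricted to $C_c^\infty(A^c)$, with form domain the closure of $C_c^\infty(A^c)$ under $\mathcal E$. For a nice initial density, $\pp_\mu(\tau_A > t) = \mu\!\left(P_t^{A^c} \One\right)$, so it suffices to bound the operator norm (or the relevant matrix element) of $P_t^{A^c}$. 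The assumption \eqref{eq:AdP_ineq} says exactly that the bottom of the spectrum of $-L$ with Dirichlet conditions on $A$ is at least $\lambda$, hence $\|P_t^{A^c}\|_{L^2(\mu)\to L^2(\mu)} \le e^{-\lambda t}$, and since $P_t^{A^c}$ is sub-Markovian we get $\mu(P_t^{A^c}\One) \le e^{-\lambda t}$ after controlling the $L^2$--$L^1$ passage (using $\mu(\spin^N)=1$ and Cauchy--Schwarz, or directly that $\One \in L^2(\mu)$ with $\mu(\One^2)=1$).

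The steps I would carry out, in order: (i) define the killed semigroup and its self-adjoint generator via the quadratic form $(\mathcal E, \overline{C_c^\infty(A^c)})$, noting it is well defined and Markovian because $\mu$ is a probability measure and $\mathcal E$ is a regular Dirichlet form; (ii) justify the probabilistic identity $\pp_\mu(\tau_A>t) = \ee_\mu[\One_{\{\tau_A>t\}}] = \mu(P_t^{A^c}\One)$, which requires knowing that the killed process is the Hunt process associated to this form and that $A$ is regular enough for the hitting time to be an honest killing time — this is where the Poincar\'e cone condition on $A^c$ enters, ensuring every boundary point of $A$ is regular for $A$ so that there is no discrepancy between $\tau_A$ and the first exit time of the open set $A^c$; (iii) use the spectral bound: \eqref{eq:AdP_ineq} gives $\langle (-L_{A^c}) f, f\rangle_\mu = \mathcal E f \ge \lambda \|f\|_{L^2(\mu)}^2$ for all $f$ in the form core, hence for all $f$ in the form domain, so by the spectral theorem $\|P_t^{A^c}\|_{L^2(\mu)\to L^2(\mu)} = \| e^{t L_{A^c}}\| \le e^{-\lambda t}$; (iv) conclude $\pp_\mu(\tau_A>t) = \langle P_t^{A^c}\One, \One\rangle_\mu \le \|P_t^{A^c}\One\|_{L^2(\mu)}\,\|\One\|_{L^2(\mu)} \le e^{-\lambda t}$.

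The main obstacle is step (ii): making rigorous sense of the killed semigroup on $A^c$ and identifying $\mu(P_t^{A^c}\One)$ with $\pp_\mu(\tau_A>t)$ when $A$ is merely closed with a cone condition rather than smooth. One must check that the Dirichlet form restricted to $A^c$ is regular, that its associated process coincides with the original diffusion stopped at $\tau_A$, and that the set $A$ is not "too thin" — the Poincar\'e cone condition guarantees that $\mu(\partial A)=0$ and that boundary points are regular, so that $C_c^\infty(A^c)$ is a form core and the probabilistic and analytic objects match. These points, together with the precise statement borrowed from \cite{AdP} and the adaptation from the discrete interacting-particle setting to the continuous diffusion, are exactly the technical complications deferred to Appendix \ref{sec:AdPproof}; the remaining spectral argument in steps (iii)--(iv) is then immediate.
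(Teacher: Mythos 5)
Your overall architecture is the same as the paper's: pass to a killed/stopped semigroup on $A^c$, use the hypothesis \eqref{eq:AdP_ineq} as a bound on the bottom of the spectrum of its generator, apply the spectral theorem to get $e^{-\lambda t}$ decay in $L^2(\mu)$, and conclude by testing against $\One_{A^c}$ with Cauchy--Schwarz. Steps (iii)--(iv) are fine. The problem is that your step (ii) -- which you correctly identify as ``the main obstacle'' -- is precisely the content of the lemma's proof, and you do not supply it: you define $P_t^{A^c}$ analytically, as the semigroup of the Friedrichs extension of $L\restriction_{C_c^\infty(A^c)}$ (form domain the closure of $C_c^\infty(A^c)$), and then assert that $\pp_\mu(\tau_A>t)=\mu(P_t^{A^c}\One)$ because ``the probabilistic and analytic objects match,'' deferring the justification to Appendix \ref{sec:AdPproof}. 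Since that appendix \emph{is} the proof of Lemma \ref{lem:AdP}, this is circular: with only what you have written, the hypothesis over $C_c^\infty(A^c)$ controls the Friedrichs-extension semigroup, but nothing connects that operator to the law of $\tau_A$. The cone condition by itself does not do this (nor is ``$\mu(\partial A)=0$'' the relevant point); one must actually prove that the probabilistically defined semigroup $f\mapsto\ee_\cdot[f(X(t\wedge\tau_A))]$ is a strongly continuous symmetric contraction whose generator extends $L$ on $C_c^\infty(A^c)$, and -- the crux -- that its form domain is no larger than the closure of $C_c^\infty(A^c)$, so that the infimum of the Rayleigh quotient over $C_c^\infty(A^c)$ really bounds the bottom of its spectrum.

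For comparison, the paper fills exactly this hole in three steps: Lemma \ref{lem:operator} establishes the semigroup/generator facts for the stopped process; Lemma \ref{lem:eigenvaluevalue} identifies the probabilistic resolvent $G_\alpha$ with the Dirichlet Green operator $G'_\alpha$ (using uniqueness of the martingale problem and elliptic regularity, which requires a \emph{smooth} boundary), yielding $D(\mathcal E^A)=H^1_0(A^c)$ and hence the variational formula over $C_c^\infty(A^c)$; and Proposition \ref{th:asselah} removes the smoothness assumption by approximating $A$ from inside by smooth-boundary sets, using monotonicity of $\lambda_0$, path continuity, and regularity of $\partial A$ for $\mathring A$ (Lemma \ref{lem:regularBoundary} via Girsanov plus the cone condition, Corollary \ref{cor:poincarecone}) to ensure $\tau_{\mathring A}=\tau_A$ a.s. An alternative honest route in your spirit would be to invoke the theory of parts of regular Dirichlet forms (e.g.\ Fukushima--Oshima--Takeda) to identify the killed process with the form closure of $C_c^\infty(A^c)$, handling the quasi-everywhere caveats; but that identification must be stated and verified, not assumed. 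As written, the proposal sketches the right strategy while omitting its essential technical core.
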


This lemma will allow us to find the bound on the hitting time that we are looking for.
Starting from winding number $k$, the corresponding set $A$ we are interested in is $\left\{ \windnum\neq k\right\}$. It turns out, thought, that it is more convenient to consider the set 
\begin{equation}\label{eq:A}
A=\left\{ \windnum\neq k\right\} \cup E,
\end{equation}
where $E$ is some "bad" event that we will define below. In fact, we will show that $\tau_E$ is  usually much bigger than $\tau_A$ (see equation \eqref{eq:upperBoundTauA} and Proposition \ref{prop:tauEisBig}), so that $\tau_A$ indeed captures the time needed for the winding number to change.
To obtain the upper bound on $\tau_A$, using Lemma \ref{lem:AdP}, it will be enough to prove inequality \eqref{eq:AdP_ineq}. This is done via a path argument (see, e.g., \cite[Section 13.5]{levinpereswilmer}). By averaging over different paths, as in \cite[Section 13.5.1]{levinpereswilmer}, we will be able to find the correct entropic term.

Start by defining the bad event. Let $r\in \nn$ and $\delta>0$ be two parameters (independent of $N$) to be determined later on, and consider the intervals $rj,\dots,rj+k-1$ for $j=1,\dots,\floor{N/r}$. We say that such an interval is \emph{good} (for a state $x\in \spin^N$) if $|x_{i+1}-x_i|\leq 3\delta$ for all $i\in \{rj,\dots,rj+r-1\}$. The set of $j$ such that $rj,\dots,rj+k-1$ is a good interval is denoted by $G_x$.
Then the \emph{bad event} is
\begin{equation}\label{eq:badevent}
E = \{x : |G_x| < \floor{N/2r} \}.
\end{equation}

\begin{remark}
By our definition of $\windnum$, the set $\{\windnum \neq k\}$ is closed, hence $A$ is closed. Furthermore $A^c$ satisfies the Poincar\'e cone condition, so that by Corollary \ref{cor:poincarecone} and Lemma \ref{th:asselah} we may use Lemma \ref{lem:AdP} with this set $A$.
\end{remark}

The following proposition bounds from below the time needed to enter $E$.
\begin{proposition} \label{prop:tauEisBig}
Suppose that $\frac{J}{\sigma^2} \rightarrow \infty$ with $J/\sigma^2 = o(N)$ and $\sigma = o(N^\alpha)$ for all $\alpha>0$. Then, for $N$ large,
\[
\pp_\mu\left(\tau_{E} < e^{\frac{2J}{\sigma^2}}\right) = o \left(\sqrt{\frac{J}{\sigma^2 N}} \right).
\]
\end{proposition}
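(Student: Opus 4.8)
The plan is to apply Lemma~\ref{lm:LemmaForlowerBound} with $B=\spin^{N}$, $A=E$, time scale $T=e^{2J/\sigma^{2}}$, and a small fixed thickening radius $\eta$ (in particular $\eta\le\delta/2$). With $B=\spin^{N}$ we have $\mu(B)=1$ and the hypothesis $\tau_{A}\le\tau_{B^{c}}$ is vacuous, since $\tau_{B^{c}}=\tau_{\emptyset}=\infty$; moreover $\log T=2J/\sigma^{2}$ gives $J/\sigma^{2}\le\tfrac12(\log T+\log N)\to\infty$, and $\sigma=o(N^{\alpha})=O((NT)^{\alpha})$ because $NT\ge N$. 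The proof of the lemma then bounds $\pp_{\mu}(\tau_{E}<T)$ by $\pp(\Omega)+O(\alpha_{N})$, where $\alpha_{N}=\sigma^{2}T(\log T+\log N)\,\mu(E_{\eta})$, $E_\eta$ is the $\eta$-thickening of $E$, and the ``bad noise'' term $\pp(\Omega)$ is estimated there to be of order $N^{-3/2}e^{-3J/\sigma^{2}}(\log N+J/\sigma^2)^{1/2}\sigma^{2}$, which is easily seen to be $o(\sqrt{J/(\sigma^{2}N)})$ under the hypotheses on $\sigma$. So everything reduces to showing that $\mu(E_{\eta})$ is small enough to make $\alpha_{N}=o(\sqrt{J/(\sigma^{2}N)})$; since $\alpha_{N}$ carries the potentially huge factor $e^{2J/\sigma^{2}}$, this requires $\mu(E_{\eta})$ to be exponentially small in $N$.

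\textbf{Reducing $E_\eta$ to an increment event and removing the conditioning.} Shifting each coordinate by less than $\eta$ changes each angular increment $\widetilde{x_{i+1}-x_{i}}$ by at most $2\eta\le\delta$ (triangle inequality on $\spin$), so $E_{\eta}$ is contained in the event $F$ that at least $\floor{N/r}-\floor{N/2r}$ of the blocks $j\in\{1,\dots,\floor{N/r}\}$ contain an index $i$ with $|\widetilde{x_{i+1}-x_{i}}|>2\delta$; call such a block \emph{bad}. This event depends on $x$ only through the increments, so by Corollary~\ref{cor:RWrepr} we have $\mu(F)=\mathrm P(F\mid S_{N}=0)$, with the $\xi_{i}$ i.i.d.\ of density $h$ under $\mathrm P$. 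To dispose of the conditioning I would union-bound over the choice of a set $S$ of $\floor{N/r}-\floor{N/2r}$ bad blocks; the event ``all blocks in $S$ are bad'' uses only a proper subset of the increments (at most about $N/2$ of them), so one may condition on those and integrate out the remaining $\ge r$ free increments against $h$. Since the density of a sum of $\ge r$ i.i.d.\ copies of $h$ is at most $\norm{h}_{\infty}=\tfrac1m e^{J/2\sigma^{2}}=O(\sqrt{J/\sigma^{2}})$ by Proposition~\ref{prop:msbeta}, while $\mathrm P(S_{N}=0)=h^{\star N}(0)=\tfrac1{2\pi}\sum_{k}g^{\star N}(k)$ stays bounded away from $0$ for large $N$ (it tends to $\tfrac1{2\pi}$, cf.\ the proof of Proposition~\ref{prop:estimateProbWeqk}), the conditioning costs only a factor $O(\sqrt{J/\sigma^{2}})$, giving $\mu(E_{\eta})\le C\sqrt{J/\sigma^{2}}\sum_{S}\mathrm P(\text{all blocks in }S\text{ bad})$.

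\textbf{A large-deviation bound for $\mathrm P(F)$.} Under $\mathrm P$ the blocks are pairwise disjoint, so the events ``block $j$ is bad'' are independent, each of probability at most $p:=r\,\mathrm P(|\widetilde{\xi_{1}}|>2\delta)$; bounding $\mathrm P(|\widetilde{\xi_{1}}|>2\delta)\le\tfrac{2\pi}{m}e^{\frac{J}{2\sigma^{2}}\cos 2\delta}$ and using the asymptotics of $m$ shows $p\to0$ as $J/\sigma^{2}\to\infty$, hence $p\le\tfrac18$ for $N$ large. A binomial estimate then gives $\sum_{S}\mathrm P(\text{all blocks in }S\text{ bad})\le\binom{\floor{N/r}}{\floor{N/2r}}p^{\,\floor{N/r}-\floor{N/2r}}\le C\,2^{-N/2r}$, so $\mu(E_{\eta})\le C\sqrt{J/\sigma^{2}}\,2^{-N/2r}$ for $N$ large.

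\textbf{Conclusion and the main obstacle.} Feeding this into $\alpha_{N}$ yields $\alpha_{N}\le C\,\sigma^{2}e^{2J/\sigma^{2}}(2J/\sigma^{2}+\log N)\sqrt{J/\sigma^{2}}\,2^{-N/2r}$, and the hypothesis $J/\sigma^{2}=o(N)$ is precisely what closes the argument: it forces $e^{2J/\sigma^{2}}=e^{o(N)}$, which is overwhelmed by $2^{-N/2r}$, while $\sigma=o(N^{\alpha})$ keeps the remaining prefactor polynomial in $N$; hence $\alpha_{N}\le 2^{-N/4r}$ for $N$ large, which is $o(\sqrt{J/(\sigma^{2}N)})$ since the latter exceeds $N^{-1/2}$ once $J/\sigma^{2}\ge1$. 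Combined with the bound on $\pp(\Omega)$ this gives $\pp_{\mu}(\tau_{E}<e^{2J/\sigma^{2}})=o(\sqrt{J/(\sigma^{2}N)})$. The one genuinely delicate point is this competition of scales — the time window $e^{2J/\sigma^{2}}$ over which we union-bound can be enormous, so $E$ must be exponentially (in $N$) atypical under $\mu$, which is exactly why the bad event is built from $\floor{N/2r}$ good blocks rather than a single crossing; the reduction to an increment event, the removal of the $\{S_{N}=0\}$ conditioning, and the binomial tail bound are all routine once this is in place.
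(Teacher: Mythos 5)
Your proposal is correct and follows essentially the same route as the paper: apply Lemma~\ref{lm:LemmaForlowerBound} with $B=\spin^N$ and $T=e^{2J/\sigma^2}$, include the thickened bad event in an event about the increments, compare the conditioned chain to the i.i.d.\ increment measure at a cost $O(\sqrt{J/\sigma^2})$, and beat the factor $e^{2J/\sigma^2}$ by a bound exponentially small in $N$ using $J/\sigma^2=o(N)$ (the paper packages the comparison as Lemma~\ref{lem:comparaison} and invokes the Bernoulli rate function where you use a union bound over bad blocks plus a binomial estimate, but these are cosmetic differences). Your explicit verification that $\pp(\Omega)=o\bigl(\sqrt{J/(\sigma^2 N)}\bigr)$ is a harmless extra precaution that the paper leaves implicit in the statement of Lemma~\ref{lm:LemmaForlowerBound}.
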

We begin with a lemma that we will use in the proof of the proposition.
\begin{lem} \label{lem:comparaison}
Assume that $J/\sigma^2 = o(N)$. Then for every measurable set $F\subseteq \spin^N$,
\[
\mu(F) \leq C \sqrt{\frac{J}{\sigma^2}}\, \mu^{\text{ind}}(F),
\]
where $\mu^{\text{ind}}$ denotes the law of $(\Xi, \Xi+S_1,\dots,\Xi+S_{N-1})$ (recall Definition \ref{def:SkXi}). 
\end{lem}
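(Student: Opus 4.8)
The plan is to compare the two measures explicitly using the random-walk representation of Proposition \ref{prop:RWrepr}. Under $\mu$, the vector $(X_0,\dots,X_N)$ has the law of $(\Xi,\Xi+S_1,\dots,\Xi+S_N)$ conditioned on $\{S_N=0\}$; in other words, for any event $F\subseteq\spin^N$ (viewed as an event on $(x_0,\dots,x_{N-1})$, since $x_N=x_0$),
\[
\mu(F)=\frac{\mathrm P\big(F\cap\{S_N=0\}\big)}{\mathrm P(S_N=0)}\le\frac{\mathrm P(F)}{\mathrm P(S_N=0)}
=\frac{\mu^{\text{ind}}(F)}{\mathrm P(S_N=0)},
\]
where here $\mathrm P(S_N=0)$ should be understood as the value of the density of $S_N$ at $0$ (the conditioning is in the sense of densities, exactly as in Proposition \ref{prop:RWrepr}), and $\mu^{\text{ind}}$ is precisely the unconditioned law of $(\Xi,\Xi+S_1,\dots,\Xi+S_{N-1})$. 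So the whole lemma reduces to the lower bound
\[
\text{density of }S_N\text{ at }0 \;\ge\; c\,\sqrt{\frac{\sigma^2}{J}}
\]
for $N$ large, uniformly in the relevant range of parameters.

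Next I would identify this density with a convolution power already controlled in the excerpt. Writing $\rho_i=\widetilde{\xi_i}\in(-\pi,\pi)$ as the real-valued representative and $g$ for the density of $\rho_i/2\pi$ on $\rr$ (as in Corollary \ref{cor:WindNumber}), the event $\{S_N=0\}$ corresponds to $\sum_i\rho_i\in2\pi\zz$, and the density of $S_N$ at $0$ on $\spin$ equals $\sum_{k\in\zz}g^{\star N}(k)$ (this is the same identity underlying equation \eqref{eq:probWegalk}: the normalizing constant there is exactly $\sum_k g^{\star N}(k)$). By Proposition \ref{prop:estimateProbWeqk} — more precisely by the computation in its proof, where it is shown that $\sum_{k\in\zz}g^{\star N}(k)\to 1$ — this sum is bounded below by a positive constant for $N$ large. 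In fact one only needs the single term $g^{\star N}(0)\ge c\sqrt{J}/(\sigma\sqrt N)\cdot\sqrt N$… — more cleanly, the local limit estimate \eqref{eq:LLT} with $y=0$ gives $g^{\star N}(0)=\frac{1}{s\sqrt N}\varphi(0)+O((sN)^{-1})$ and since $s^2N\to\infty$ this is bounded below; combined with the nonnegativity of all other terms, $\sum_k g^{\star N}(k)\ge c$. Plugging this into the display above yields $\mu(F)\le C\,\mu^{\text{ind}}(F)$ with an absolute constant $C$ — which is even stronger than the stated bound, so a fortiori $\mu(F)\le C\sqrt{J/\sigma^2}\,\mu^{\text{ind}}(F)$ holds (using $J/\sigma^2\to\infty$ to absorb any constant).

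The only genuinely delicate point is uniformity: the constants $c,C$ in Proposition \ref{prop:estimateProbWeqk} and in \eqref{eq:LLT} are asserted to be independent of $J$ and $\sigma$, and this is exactly what is needed here since $J,\sigma$ vary with $N$. I would therefore simply invoke Proposition \ref{prop:estimateProbWeqk} (and the display $\sum_k g^{\star N}(k)\to1$ established in its proof) as a black box, so that the uniformity issue has already been dealt with upstream. One small bookkeeping matter is to make sure the conditioning identity from Proposition \ref{prop:RWrepr} is applied to $(x_0,\dots,x_{N-1})$ rather than $(x_0,\dots,x_N)$ — but since $x_N=x_0$ is forced on both sides, an event $F\subseteq\spin^N$ can be read as an event in either $N$ coordinates $(x_1,\dots,x_N)$ or $N$ coordinates $(x_0,\dots,x_{N-1})$ without ambiguity, and the inequality $\mathrm P(F\cap\{S_N=0\})\le\mathrm P(F)\cdot(\text{density of }S_N\text{ at }0)/(\text{density of }S_N\text{ at }0)$ is just the trivial bound $\One_{\{S_N=0\}}\le 1$ at the level of the conditional density. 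I do not expect any real obstacle beyond writing this carefully.
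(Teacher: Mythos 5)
There is a genuine gap, and it sits exactly at the step you flag as ``trivial''. Since $\{S_N=0\}$ has probability zero, the identity $\mu(F)=\mathrm P(F\cap\{S_N=0\})/\mathrm P(S_N=0)$ only makes sense at the level of densities: writing out Proposition \ref{prop:RWrepr}, the numerator is
\[
\int_F \frac{1}{2\pi}\prod_{i=1}^{N-1}h(x_{i+1}-x_i)\,h(x_1-x_N)\,\dd x ,
\]
i.e.\ the $\mu^{\text{ind}}$-density multiplied by the \emph{conditional density} $h(x_1-x_N)$ of $S_N$ at $0$ given the first $N$ coordinates, while the denominator is $h^{\star N}(0)$. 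The bound $\One_{\{S_N=0\}}\le 1$ has no density-level analogue here: $h(x_1-x_N)$ is a density value, not a probability, and it can be as large as $\sup h=\frac{1}{m}e^{J/2\sigma^2}\asymp\sqrt{J/\sigma^2}\to\infty$. So the inequality $\mathrm P(F\cap\{S_N=0\})\le\mathrm P(F)$ you rely on is false in this setting. The correct estimate is $\mu(F)\le \frac{\sup h}{h^{\star N}(0)}\,\mu^{\text{ind}}(F)$, which (together with $h^{\star N}(0)\to(2\pi)^{-1}$, the part of your argument that is fine, and the asymptotics \eqref{eq:m} for $m$) is precisely where the factor $\sqrt{J/\sigma^2}$ in the lemma comes from; this is the paper's proof.

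That the gap is not merely cosmetic is shown by the fact that your claimed ``stronger'' conclusion $\mu(F)\le C\,\mu^{\text{ind}}(F)$ with an absolute constant is false. Take $F=\bigl\{\,|\widetilde{x_1-x_N}|\le \sigma/\sqrt J\,\bigr\}$. Under $\mu$, $X_1-X_N=X_1-X_0$ is a single increment $\xi_1$ conditioned on $S_N=0$, of typical size $\sigma/\sqrt J$, so $\mu(F)$ is bounded below by a constant; under $\mu^{\text{ind}}$, $x_1-x_N=-S_{N-1}$, whose density on $\spin$ is uniformly bounded (by the local limit estimates in Proposition \ref{prop:estimateProbWeqk}, using $J/\sigma^2=o(N)$), so $\mu^{\text{ind}}(F)=O(\sigma/\sqrt J)$. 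Hence $\mu(F)/\mu^{\text{ind}}(F)\gtrsim\sqrt{J/\sigma^2}\to\infty$, which both refutes the constant-factor bound and shows that the factor $\sqrt{J/\sigma^2}$ in the lemma is of the right order.
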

\begin{proof}
Let $\phi(x_1,\dots,x_N)$ be any non-negative measurable function. By Proposition \ref{prop:RWrepr}, 
\begin{align*}
\mu(\phi(X_1,\dots,X_N)) = & \int \frac{h(x_1-x_N)\prod_{i=1}^{N-1} h(x_{i+1}-x_i)}{2\pi h^{\star N}(0)} \phi(x_1,\dots,x_N) \dd x_1 \dots \dd x_N \\
 \leq & \frac{\sup h}{h^{\star N}(0)} \int \frac{\prod_{i=1}^{N-1} h(x_{i+1}-x_i)}{2\pi}\phi(x_1,\dots,x_N) \dd x_1 \dots \dd x_N \\
 \leq & C \sqrt{\frac{J}{\sigma^2}}\, \mu^{\text{ind}}(\phi(X_1,\dots,X_N)),
\end{align*}
where we have used equation \eqref{eq:m} and the fact that $h^{\star N}(0)$ is bounded away from $0$ uniformly in $N$. The latter could be deduced from the fact that $\sum_k g^{\star N}(k)\to 1$ (see proof of Proposition \ref{prop:estimateProbWeqk}), which implies that $h^{\star N}(0) \rightarrow (2\pi)^{-1}$. 
\end{proof}

\begin{proof}[Proof of Proposition \ref{prop:tauEisBig} ]
The proof of the proposition is based on Lemma \ref{lm:LemmaForlowerBound}, with $E$ and $\mathcal{S}^N$ playing the roles of $A$ and $B$ respectively. We thus wish to bound from above $\mu(E_\delta)$, where $E_\delta$ is the $\delta$-thickening of $E$. Note that $E_\delta \subset \left\{\sum_{i=1}^N \mathbf{1}_{|X_i-X_{i-1}| \geq \delta} \geq \frac{N}{2r}\right\}$, and that the probability of the last event with respect to $\mu^{\text{ind}}$ is bounded from above by $e^{-NI(a)}$, with $I$ the rate function of the Bernoulli random variable with parameter $p_\delta = P(|\xi_1| \geq \delta)$  and $a=(2r)^{-1}$. As $\frac{J}{\sigma^2}$ tends to infinity, $p_\delta \rightarrow 0$ (e.g. by Chebyshev's inequality and equation \eqref{eq:s}), so $I(a)\rightarrow \infty$.
We can now use Lemma \ref{lm:LemmaForlowerBound} -- by Lemma \ref{lem:comparaison},
\begin{align*}
\alpha_N = \sigma^2 e^{\frac{2J}{\sigma^2}} \left(\frac{2J}{\sigma^2} + \log N \right)\mu(E_\delta) \le
 C \sigma^2 e^{\frac{2J}{\sigma^2}} \left(\frac{2J}{\sigma^2} + \log N \right)\sqrt{\frac{J}{\sigma^2}} e^{-I(a)N}\\
=o \left(\sqrt{\frac{J}{\sigma^2 N}} \right),
\end{align*}
and this concludes the proof.
\end{proof}

\noindent \textbf{Conclusion of the proof of the upper bound (Theorem \ref{th:mainTheorem2}).}
As explained above, we are reduced to studying the entry time of $A$, recalling equations \eqref{eq:A} and \eqref{eq:badevent}.
We postpone to Section \ref{subsec:proofPoincare} the proof of the following lemma:
\begin{lem} \label{lem:variational_ineq} 
For all $\varepsilon >0$, we can choose $\delta$ small enough and $r$ large enough such that for every $f$ in $C^\infty_c(A^c)$, 
\[\mu(f^2) \leq C e^{\frac{J}{\sigma^2} (1+\varepsilon) - \log N} \, \mathcal Ef.\]
\end{lem}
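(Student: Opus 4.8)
The plan is to prove the Dirichlet--Poincar\'e inequality of Lemma \ref{lem:variational_ineq} directly, since this is exactly the hypothesis needed to invoke Lemma \ref{lem:AdP} on the (open) set $A^c=\{\windnum=k\}\cap E^c$. Write $W=e^{-H_N/2\sigma^2}$ for the unnormalised density of $\mu$, fix $f\in C^\infty_c(A^c)$, and use that $f$ vanishes on a neighbourhood of $A$ --- in particular on the transition surface $\{\exists i:\ x_i-x_{i-1}=\pi\}$, which makes up the relevant part of $\partial\{\windnum=k\}$. For each $x\in A^c$ and each good block $j\in G_x$ I would fix a path $\gamma_{x,j}$ running from $x$ into $A$, so that $f(x)=f(x)-f(\text{endpoint})=-\int_{\gamma_{x,j}}\grad f\cdot\mathrm{d}\vec\ell$; a Cauchy--Schwarz inequality with the weight $W$ then gives $f(x)^2\le\bigl(\int_{\gamma_{x,j}}W^{-1}\,\mathrm{d}\ell\bigr)\bigl(\int_{\gamma_{x,j}}W\,|\grad f|^2\,\mathrm{d}\ell\bigr)$. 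Averaging this over the $|G_x|\ge\floor{N/2r}$ admissible blocks $j$, multiplying by $W(x)$ and integrating over $A^c$ is the skeleton; the key point --- as in \cite[Section 13.5.1]{levinpereswilmer} --- is that this averaging should produce the entropic gain $e^{-\log N}$.

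The paths. Starting from $x$ with block $j=\{rj,\dots,rj+r\}$ good, I would move only the $r-1$ interior coordinates $x_{rj+1},\dots,x_{rj+r-1}$ along a single straight segment (in the lifted coordinates) of the form $x_{\mathcal I_j}\mapsto x_{\mathcal I_j}+s\vec v$, $s\in[0,1]$, chosen so that the first link $x_{rj+1}-x_{rj}$ is driven monotonically through $\pi$ while the resulting increase is compensated by an equal decrease spread over the remaining $r$ links of the block, each changing by only $O(\pi/r)$; taking $r$ large keeps all those links of size $O(\delta)$ along the whole segment, so $H$ rises by at most $2J\,(1+o_\delta(1))$, and the endpoint lies in $\{\windnum=k\pm1\}\subseteq A$ (or enters $E\subseteq A$ earlier, which is equally fine). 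The vector $\vec v$ does not depend on $x$ and has $\|\vec v\|=O(\sqrt r)$. With such a path, $W(x)\int_{\gamma_{x,j}}W^{-1}\,\mathrm{d}\ell=\int_{\gamma_{x,j}}\tfrac{W(x)}{W(z)}\,\mathrm{d}\ell(z)$; since the integrand is $e^{(H(z)-H(x))/2\sigma^2}$ and, by Laplace's method, peaks at the saddle at $e^{J/\sigma^2}$ over a window of width of order $\sqrt{\sigma^2/J}$, this line integral is at most $C\,\sqrt{\sigma^2/J}\,e^{\frac{J}{\sigma^2}(1+\varepsilon)}$, with the $O(\delta)$-link and $r$-dependent corrections absorbed into $\varepsilon$. (Here the exponent $J/\sigma^2$ is the barrier height $2J$ in units of $2\sigma^2$, matching the computation of $\mu(A_\delta)$ in the lower bound.)

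The congestion step, which I expect to be the main obstacle. After the Cauchy--Schwarz and the change of variables $z=\gamma_{x,j}(s)$ --- whose Jacobian is $1$, being a translation of the block-$j$ coordinates --- one is left with $\mathcal{E}f$ times a congestion factor measuring, for each configuration $z$, the $\mu$-weighted number of the $\floor{N/r}$ path-bundles that run through $z$. The surviving $\tfrac1{|G_x|}\le 2r/N$ from the averaging is what must beat this congestion to leave a net factor $N^{-1}$. The delicate point is that a crude uniform congestion bound over-counts --- a configuration deep in the well is geometrically on almost every bundle --- and loses the factor $N$; the argument must instead exploit the block structure and the removal of the bad set $E$ (which guarantees $\ge N/2r$ disjoint good blocks, so that distinct crossings move disjoint coordinate sets and are separated near their respective saddles) to organise the crossings and carry out the congestion estimate with the correct $W$-weighting, so that each point is effectively loaded by only $O(r)$ bundles. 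This is the part where I expect the real work to be, and where the block parameter $r$ earns its keep.

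Granting the congestion bound, one combines the three ingredients --- the Laplace estimate $\int_{\gamma_{x,j}}\tfrac{W(x)}{W(z)}\,\mathrm{d}\ell(z)\le C\sqrt{\sigma^2/J}\,e^{\frac J{\sigma^2}(1+\varepsilon)}$ for the path energy, the $\tfrac1{|G_x|}\le 2r/N$ from averaging over good blocks, and the congestion estimate --- to obtain
\[
\mu(f^2)\ \le\ C\,e^{\frac{J}{\sigma^2}(1+\varepsilon)-\log N}\,\mathcal{E}f ,
\]
after renaming $\varepsilon$ and absorbing $\sqrt{\sigma^2/J}$ together with the $r,\delta$-dependent constants (harmless because $\sigma=o(N^\alpha)$ and $J/\sigma^2\to\infty$). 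Finally one should check the standing regularity requirements --- that $A^c$ is open and that $A^c$ satisfies the Poincar\'e cone condition --- so that Lemma \ref{lem:AdP} genuinely applies with this $A$; both follow from the definition of $\windnum$ and of $E$.
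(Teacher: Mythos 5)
Your skeleton --- paths that move only the coordinates of one good block, an energy cost $2J+O\bigl(J(\delta+\tfrac1r)\bigr)$ along each path, and the averaging over $|G_x|\ge\floor{N/2r}$ good blocks to produce the entropic factor $e^{-\log N}$ --- is the same as the paper's. But the step you explicitly defer (``granting the congestion bound'') is exactly where the whole difficulty sits, and as written it is a genuine gap, not a technical loose end: you yourself note that a configuration deep in the well lies on essentially all $\floor{N/r}$ bundles, so any estimate that counts whole bundles passing through a point loses the factor $N$ you need, and the remedy you gesture at (separation of the crossings near their saddles, exploiting the removal of $E$) is neither formulated nor proved. Without it the inequality you claim does not follow from the ingredients you have established.

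The paper's resolution is that no congestion estimate is needed at all, and the mechanism is orthogonality of the gradient components, not geometric separation of the crossings. Because the path $\phi_j^x$ moves only the block-$j$ coordinates, the line integral of $\grad f$ along it involves only $\Pi_j\grad f$; Cauchy--Schwarz with $\int_0^{2\pi}\bigl(\tfrac{\dd\phi_j^x}{\dd t}\bigr)^2\dd t\le 2\pi r$ gives $f(x)^2\le\frac{2\pi r}{|G_x|}\sum_{j}\int_0^{2\pi}\bigl(\Pi_j\grad f(\phi_j^x(t))\bigr)^2\dd t$. One then bounds $e^{-H(x)/2\sigma^2}\le e^{\Delta H/2\sigma^2}e^{-H(\phi_j^x(t))/2\sigma^2}$ pointwise (the uniform barrier estimate along the path), and for each fixed $t$ and $j$ changes variables $y=\phi_j^x(t)$, which is a translation of the block-$j$ coordinates and so has Jacobian $1$. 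The sum over $j$ then costs nothing because the blocks are disjoint: $\sum_j\bigl(\Pi_j\grad f(y)\bigr)^2\le\bigl(\grad f(y)\bigr)^2$. In other words, every $y$ is indeed hit by every bundle, but bundle $j$ loads only the block-$j$ part of $|\grad f(y)|^2$, so the total load is $|\grad f(y)|^2$ and one lands directly on $\mu(f^2)\le\frac{8\pi r^2}{N}e^{\Delta H/2\sigma^2}\,\mathcal Ef$, which is the claim after choosing $\delta<\varepsilon/4\pi$ and $r>4\pi/\varepsilon$. Two smaller remarks: your weighted Cauchy--Schwarz with $\int_\gamma W^{-1}$ and the Laplace-method saving $\sqrt{\sigma^2/J}$ is both unproven as stated and unnecessary --- the paths have length $O(\sqrt r)$, so the sup bound $e^{\Delta H/2\sigma^2}$ suffices; and had you replaced $|\grad f|^2$ by $|\Pi_j\grad f|^2$ in your own inequality (legitimate, since your path only moves block-$j$ coordinates) and then performed the unit-Jacobian change of variables, your argument would have closed along exactly the paper's lines.
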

Define
\[T = e^{\frac{J}{\sigma^2} (1+2\varepsilon) - \log N},\]
and observe that since $\tau_A > 0$ implies $\windnum(X_0) = k$, Lemma \ref{lem:variational_ineq}, Lemma \ref{lem:AdP} and equation \eqref{eq:LBonProbW} yield (recall that $J/\sigma^2 > C \log N$):
\begin{equation} \label{eq:upperBoundTauA}
\pp_{\mu|k}(\tau_A > T) = \frac{\pp_\mu\left(\tau_A >T \right)}{\mu(\windnum = k)}
=o(1).
\end{equation}
We further obtain from equation \eqref{eq:LBonProbW} that
\begin{align*}
\pp_{\mu|k}(\tau_{k\pm 1} >T) & = \pp_{\mu|k}(\tau_{k \pm 1} > T, \tau_{E}> T) +  \pp_{\mu|k}(\tau_{k\pm 1} > T, \tau_E < T)\\
& \leq \pp_{\mu|k}(\tau_A >T) + C\frac{\sigma}{\sqrt J} N^{1/2}\, \pp_\mu\left(\tau_E < T\right).\\
\end{align*}
The first term converges to $0$ by equation \eqref{eq:upperBoundTauA}, and by Proposition \ref{prop:tauEisBig} also does the second one. \qed

\subsection{Proof of Lemma \ref{lem:variational_ineq}}
\label{subsec:proofPoincare}
We define for each $x\in A^c$ a family of paths starting in $A$ and leading to $x$. In order to obtain the correct entropic term, we need these paths to be disjoint, and we will construct them such that along the $j$-th path only the coordinates $x_{rj},\dots,x_{rj+r-1}$ change (recalling the definition of a good interval). We therefore define $\Pi_{j}$ to be the projection on these coordinates, i.e., 
\[
\Pi_{j}(x_{1},\dots,x_{N})=\left(0,\dots,0,x_{rj},\dots,x_{rj+r-1},0,\dots,0\right).
\]

\begin{proposition}\label{prop:paths}
Fix $x\in A^c$. Then for all $j\in G_x$ there exists a function
$\phi_{j}^{x}:\left[0,2\pi\right]\rightarrow \spin^N$
such that
\begin{enumerate}
\item $\phi_{j}^{x}(0)\in A$ and $\phi_{j}^{x}(2\pi)=x$,
\item $\Pi_{j}\left(\frac{\dd \phi_{j}^{x}\left(t\right)}{\dd t}\right)=\frac{\dd \phi_{j}^{x}\left(t\right)}{\dd t}$ with $\Pi_j$ as above,
\item $\left(\frac{\dd \phi_{j}^{x}\left(t\right)}{\dd t}\right)^{2}\le r$
for all $t$,
\item Let $D\phi_{j}^{x}\left(t\right)$ be the derivative of $\phi$
with respect to $x$, i.e. the matrix whose $\left(\alpha,\beta\right)$ entry
is the $\alpha$ coordinate of the vertor $\frac{\partial\phi_{j}^{x}\left(t\right)}{\partial x_{\beta}}$; 
then $\left|\det\,D\phi_{j}^{x}(t)\right|^{-1}\le1$,
\item $H\left(\phi_{j}^{x}\left(t\right)\right)\leq H(x)+\Delta H$
for all $t$, where $\Delta H=2J+2\pi J\left(\delta+\frac{1}{r}\right)$.
\end{enumerate}
\end{proposition}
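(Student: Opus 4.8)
The plan is to construct $\phi_j^x$ explicitly as a two-phase path in the $r$ coordinates indexed by $rj,\dots,rj+r-1$, keeping all other coordinates frozen at their values in $x$ (this immediately gives property (2)). Since $j\in G_x$, we know $|x_{i+1}-x_i|\le 3\delta$ for $i\in\{rj,\dots,rj+r-1\}$, so the block of rotors $x_{rj},\dots,x_{rj+r-1}$ together with its frozen neighbours $x_{rj-1}$ and $x_{rj+r}$ forms a gently varying arc. The idea is: over $t\in[0,\pi]$, interpolate the $r$ coordinates linearly from their target values $(x_{rj},\dots,x_{rj+r-1})$ to a configuration in which one adjacent pair inside the block points in opposite directions — i.e.\ some $X_i - X_{i-1} = \pi$ — thereby reaching a point of $A$ (either because we've crossed into $\{\windnum\neq k\}$ or simply because we've hit the undefined locus, which by convention lies in $A$); then run the same interpolation backwards on $t\in[\pi,2\pi]$ with a relabeled speed so that $\phi_j^x(2\pi)=x$. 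Concretely, one natural choice is to move the interior coordinates so that one designated bond opens up to angle $\pi$ while the rest of the block is dragged along affinely; the endpoints of the block stay pinned to $x_{rj-1}$ and $x_{rj+r}$ so the path stays a genuine loop-modification and property (1) holds.

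For property (3), the total angular displacement needed in each of the $r$ coordinates is $O(1)$ (at most $\pi$ plus the $O(\delta)$ initial spread), so running the interpolation at constant speed over a time interval of length $\pi$ gives $|\dot\phi_j^x(t)|^2 = \sum (\text{per-coordinate speed})^2 \le r \cdot O(1)$; one tunes the parametrization constants so that this is $\le r$ exactly. Property (4): the map $x\mapsto \phi_j^x(t)$ acts as the identity on the $N-r$ frozen coordinates and as an affine (in fact, for the linear-interpolation construction, a shear-type) map on the $r$ moving ones, so $D\phi_j^x(t)$ is block-triangular with an identity block and a unipotent $r\times r$ block, giving $|\det D\phi_j^x(t)| = 1$ and hence $|\det D\phi_j^x(t)|^{-1}\le 1$. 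The one subtlety here is making sure the "target bond to open" and the interpolation are chosen in a way that depends measurably and affinely on $x$ throughout $A^c\cap\{j\in G_x\}$; choosing, say, always the bond $(rj, rj+1)$ and a fixed affine formula makes this automatic.

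The main work is property (5), the energy bound. Along the path, the only bonds whose energy changes are the $r+1$ bonds touching the moving block: $(rj-1,rj),(rj,rj+1),\dots,(rj+r-1,rj+r)$. Each such bond contributes at most $J - (-J) = 2J$ to $H$ relative to a reference, but we need the sharper bookkeeping that only \emph{one} bond ever reaches the maximal energy $+J$ while the others stay close to their (low) values. Since at time $0$ all $r-1$ interior bonds have angle $\le 3\delta$ and the two boundary bonds have whatever value they had in $x$ (also $\le 3\delta$, by $j\in G_x$), and the interpolation changes each interior bond's angle monotonically by at most $\pi + O(\delta) + O(1/r)$ in total — the $1/r$ coming from spreading the "extra" displacement across the block so that no single bond other than the designated one swings far — one gets, for every $t$,
\[
H(\phi_j^x(t)) - H(x) \le 2J + 2\pi J\Bigl(\delta + \tfrac1r\Bigr),
\]
by summing the $\cos$-Lipschitz bound $|\cos\theta - \cos\theta'|\le |\theta-\theta'|$ over the $r$ interior bonds (each moved by $\le 3\delta + O(1/r)$, total $\le 2\pi J/r + O(\delta J)$ after absorbing constants) plus the at most $2J$ from the single designated bond opening to $\pi$, plus the $O(\delta J)$ from the two boundary bonds. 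I expect this energy estimate — in particular arranging the interpolation so the "excess angle" is distributed evenly across the block and produces exactly the $2\pi J(\delta + 1/r)$ correction rather than a $\Theta(J)$-per-bond loss — to be the delicate step; the other four properties are essentially formal once the explicit affine construction is written down.
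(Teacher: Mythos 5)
Your construction deviates from the paper's in a way that actually breaks two of the five properties. The paper takes $\phi_j^x(t)=x+v_j(t)$ with an $x$-\emph{independent} twist $v_j(t)$: each coordinate of the block is shifted by $\frac{i-rj+1}{r}(2\pi-t)$, so a full turn of $2\pi$ is ramped across the block, $D\phi_j^x(t)$ is exactly the identity (giving (4) for free), and $\phi_j^x(0)\in A$ because its winding number is $k+1$ (one checks that only the last bond's representative jumps). You instead interpolate towards a configuration in which a designated bond sits \emph{exactly} at angle $\pi$. That target necessarily depends on $x$ (how far the bond must open depends on $x_{rj+1}-x_{rj}$), so $x\mapsto\phi_j^x(t)$ is not a translation: at the bad time it maps the open set $A^c$ into the hypersurface $\{x_{rj+1}-x_{rj}=\pi\}$, hence its Jacobian is singular there and $\left|\det D\phi_j^x(t)\right|^{-1}$ blows up near that time instead of being $\le 1$. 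Your claim that the block map is a ``unipotent shear'' is precisely what would need proof, and it is false for the natural implementation (linear interpolation towards a target determined by the pinned neighbours gives a contracting, not volume-preserving, map). A related, more minor, issue: as parametrized your path has $\phi_j^x(0)=x\notin A$, and an out-and-back path is useless in the proof of Lemma \ref{lem:variational_ineq}, which needs $f(\phi_j^x(0))=0$; keeping only the return half fixes this, but the Jacobian problem is structural unless you replace ``reach exactly $\pi$'' by an $x$-independent overshoot of one bond past $\pi$ -- at which point you have essentially reconstructed the paper's path.

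The second gap is the energy bound (5), which you rightly flag as the delicate step but whose sketch contains an error. With the designated bond opening by roughly $\pi$ and the block endpoints pinned, the remaining $\sim r$ bonds must absorb an angle change of order $1/r$ each, starting from angles $\le 3\delta$. The first-order Lipschitz bound $|\cos\theta-\cos\theta'|\le|\theta-\theta'|$ then gives a per-bond cost of order $J(\delta+1/r)$, and summing over $r$ bonds yields a total of order $J(1+\delta r)$, i.e.\ $\Theta(J)$ -- not your claimed ``$2\pi J/r+O(\delta J)$'', and not the required $2\pi J(\delta+\frac1r)$. The correct bookkeeping, as in the paper, uses that the non-designated bonds stay at \emph{small} angles ($\le 3\delta+2\pi/r$) throughout, so the relevant Lipschitz constant is $\sup|\sin|\le 3\delta+2\pi/r$; the per-bond cost is then $\le J\cdot\frac{2\pi}{r}\bigl(3\delta+\frac{2\pi}{r}\bigr)$, and only after this second-order (quadratic near the cosine maximum) gain does the sum come out as $O\bigl(J(\delta+\frac1r)\bigr)$, with the single designated bond contributing the $2J$. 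Without that factor the excess energy per block is $\Theta(J)$, $\Delta H$ picks up an extra constant times $J$, and the resulting bound $e^{\Delta H/2\sigma^2}$ in Lemma \ref{lem:variational_ineq} is off by a constant factor in the exponent, destroying the matching with Theorem \ref{th:mainTheorem}.
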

\begin{proof}
Denoting $y=\phi_{j}^{x}\left(t\right)$, set
\[
y_{i}=\begin{cases}
x_{i}+\frac{i-rj+1}{r}\,\left(2\pi-t\right) & i\in rj,\dots,rj+r-1\\
x_{i} & \text{otherwise}
\end{cases}.
\]
We need to verify 1-5. It is clear that $y=x$ at time $t=2\pi$. So for condition 1, it is enough to show that $\windnum(\phi_{j}^{x}(0))\neq k$ and we will prove that in fact $\windnum(\phi_{j}^{x}(0))= k+1$. We see that the change in the rotation angle of the path along $rj,\dots,\left(j+1\right)r$ between
times $2\pi$ and $t$ is
\begin{align*}
\sum_{i=rj}^{r(j+1) - 1}\widetilde{y_{i+1} - y_{i}} & =\sum_{i<(j+1)r-1} \widetilde{\left(x_{i+1}-x_{i}+\frac{1}{r}\,(2\pi-t)\right)}\\
 & +\widetilde{\left(x_{\left(j+1\right)r}-x_{\left(j+1\right)r-1} - (2\pi-t )\right)}.
\end{align*}
In a good interval all angles are small, hence $\widetilde{(x_{i+1}-x_{i}+r^{-1}\,\left(2\pi-t\right))}$
is continuous in $t\in[0,2\pi]$ for all $i<\left(j+1\right)r-1$. However, $\widetilde{(x_{(j+1)r}-x_{(j+1)r-1}-(2\pi-t))}$
jumps by $-2\pi$ exactly once from time $0$ to time $2\pi$. This shows
that indeed the winding number has jumped from $k+1$ to $k$ from time $0$ to $2\pi$.

Condition 2 is clear, since we only change the coordinates $rj,\dots,rj+r-1$.

For condition 3, we calculate
\[
\left(\frac{\text{d}\phi_{j,r}^{x}\left(t\right)}{\text{d}t}\right)^{2}=\sum_{i=rj}^{r(j+1)-1}\left(\frac{i-rj+1}{r}\right)^{2}=\frac{1}{r^{2}}\sum_{i=1}^{r}i^{2}\le r.
\]

Condition 4 is satisfied since $D\phi_{j,r}^{x}$ is the
identity matrix.

Condition 5 is verified by calculating, for $i<\left(j+1\right)r-1$,
\begin{equation*}
\begin{aligned}
\cos\left(x_{i+1}-x_{i}\right)-\cos\left(y_{i+1}-y_{i}\right) & =\cos\left(x_{i+1}-x_{i}\right)-\cos\left(x_{i+1}-x_{i}+\frac{1}{r}\,\left(2\pi-t\right)\right)\\
 & \le \sup_{s\in [0,2\pi]} \left|\frac{1}{r}\sin\left(x_{i+1}-x_{i}+\frac{1}{r}\,\left(2\pi-s\right)\right)\right|t\\
 & \le\left(\delta+\frac{1}{r}\right)\frac{2\pi}{r};
\end{aligned}
\end{equation*}
and for $i=\left(j+1\right)r-1$ we simply bound $\left|\text{cos}(\cdot)\right|$ by $1$.
\end{proof}
These paths could be used now in order to prove Lemma \ref{lem:variational_ineq}. We will use the notation $\phi_j^x$ for the functions defined in Proposition \ref{prop:paths} when $j\in G_x$, and when $j \notin G_x$ the function $\phi_j^x(t)$ is defined to be the constant function $x$. Note that, in both cases, conditions 2-5 of Proposition \ref{prop:paths} are satisfied; but not condition 1.
\begin{proof}[Proof of Lemma \ref{lem:variational_ineq}]
Let $\Delta H=2J+2\pi J\left(\delta+\frac{1}{r}\right)$ and let $f$ be any function in $C^\infty_c(A^c)$. For $x\in A^{c}$ and for all $j\in G_x$, noting that $f(\phi_j^x(0))=0$,
\[
f(x) =\int_0^{2\pi}\nabla f\left(\phi_{j}^{x}\left(t\right)\right)\,\frac{\text{d}\phi_{j}^{x}\left(t\right)}{\text{d}t}\dd t,
\]
and thus 
\begin{align*}
f(x)^{2} & =\frac{1}{\left|G_{x}\right|}\sum_{j\in G_{x}}\left(\int_0^{2\pi}\Pi_{j}\grad f\left(\phi_{j}^{x}\left(t\right)\right)\,\frac{\text{d}\phi_{j}^{x}\left(t\right)}{\text{d}t}\dd t\right)^{2}\\
& \le\frac{1}{\left|G_{x}\right|}\sum_{j\in G_{x}}\int_0^{2\pi}\left(\Pi_{j}\grad f\left(\phi_{j}^{x}\left(t\right)\right)\right)^{2}\dd t\,\int_0^{2\pi}\left(\frac{\dd \phi_{j}^{x}\left(t\right)}{\dd t}\right)^{2} \dd t\\
 & \le\frac{2\pi r}{N/2r}\sum_{j=1}^{\floor{N/k}}\int_0^{2\pi}\left(\Pi_{j}\grad f\left(\phi_{j}^{x}\left(t\right)\right)\right)^{2} \dd t,
\end{align*}
where in the last inequality we used $x\in E^c$, and the fact that $\frac{\dd \phi_{j}^{x}\left(t\right)}{\dd t}=0$ if $j\notin G_x$.

We can now integrate over $x$, obtaining 
\begin{align*}
\mu (f^{2}) & =\int\limits_{x\in A^{c}}e^{-H/2\sigma^{2}}f(x)^{2}\dd x \\
 & \le\int\limits _{x\in A^{c}}\text{d}x\,\frac{4\pi r^2}{N}\sum_{j=1}^{\floor{N/k}}\int\limits _{0}^{2\pi}\,e^{-H(x)/2\sigma^{2}}\left(\Pi_{j}\grad f\left(\phi_{j}^{x}\left(t\right)\right)\right)^{2} \dd t\\
 & \le \frac{4\pi r^2}{N} e^{\Delta H/2\sigma^{2}}\int\limits _{x\in A^{c}}\dd x\,\sum_{j=1}^{\floor{N/k}}\int\limits _{0}^{2\pi}\,e^{-H\left(\phi_{j}^{x}\left(t\right)\right)/2\sigma^{2}}\left(\Pi_{j}\grad f\left(\phi_{j}^{x}\left(t\right)\right)\right)^{2}\dd t,
 \end{align*}
so that 
 \begin{align*}
\mu (f^{2}) &  \leq \frac{4\pi r^2}{N} e^{\Delta H/2\sigma^{2}}\int\limits _{y\in A^{c}}\dd y\,\sum_{j=1}^{\floor{N/k}}\int\limits _{0}^{2\pi}\,e^{-H(y)/2\sigma^{2}}\left(\Pi_{j}\grad f\left(y\right)\right)^{2}\left|\det\,D\phi_{j}^{x}(t)\right|^{-1} \dd t\\
 & \le\frac{4\pi r^{2}}{N}e^{\Delta H/2\sigma^{2}}\int\limits _{y\in A^{c}}\text{d}y\,e^{-H(y)/2\sigma^{2}}\left(\grad f\left(y\right)\right)^{2}\\
 & =\frac{8\pi r^{2}}{N}e^{\Delta H/2\sigma^{2}}\mathcal{E}f.
\end{align*}
Choosing $\delta<\varepsilon/4\pi$ and $r > 4\pi/\varepsilon$ finishes the proof of the theorem.
\end{proof}

\section{Concluding remarks and further questions}
This paper presents a certain aspect of metastable topological phases in a simple toy model. We have identified a sharp transition in the behavior of the system's dynamics -- roughly speaking, when $J/\sigma^2>\log N$ different topological phases are metastable, surviving for a long time; while for $J/\sigma^2<\log N$ the winding number changes constantly over a very short time scale. In particular, staring in $\{\windnum=0\}$, the winding number will remain $0$ for a very long time in the regime $J/\sigma^2>\log N$. This is a remarkable behavior, considering that from the equilibrium point of view only when $J/\sigma^2$ reaches the scale $N$ ($\gg \log N$) the state $\windnum=0$ is thermodynamically stable.

This work allows us to gain access to the time scales related to the topological phases of this model, but many questions are left open. 
In view of general results in metastability (see, e.g., \cite{BovierDenHollander,OlivieriVares}), we expect some form of loss of
memory over short time scales, leading to the following conjecture:
\begin{conjecture}
\label{conj:main_conjecture}Assume that $\liminf \frac{J}{\sigma^2 \log N} > 1$,
and let
\[\tau^{(N)}=\frac{1}{2}\, \inf\left\{ t:\pp^{(N)}_{\mu|0}\left(\tau_{0\pm1}>t\right)<e^{-1}\right\}.
\]
Then:
\begin{enumerate}[label=(\roman*)]
\item $\tau^{(N)}=C(J_{N},\sigma_{N})e^{J_{N}/\sigma_{N}^{2}-\log N},$ where
$C(J,\sigma)$ is polynomial in $J,\sigma$.
\item Let $S>0$, and $n_{s}^{(N)}=\windnum\left(X^{(N)}({s/\tau^{(N)}})\right)$
for $s\in\left[0,S\right]$. Then, as $N\rightarrow\infty$, the process
$\left(n_{s}\right)_{0\le s\le S}$ converges to a continuous time
simple random walk, with exponential waiting times of rate $1$.
\end{enumerate}
\end{conjecture}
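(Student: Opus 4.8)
One possible route to this conjecture is the following. The plan would be to cast the problem in the potential-theoretic language of metastability (in the spirit of \cite{BovierDenHollander}), taking advantage of the fact that Theorems \ref{th:mainTheorem} and \ref{th:mainTheorem2} already pin down the exponential scale $e^{J/\sigma^2-\log N}$. Writing $W_k=\{\windnum=k\}$, the two remaining tasks are: to sharpen the prefactor to a polynomial $C(J,\sigma)$, which amounts to a sharp estimate of $\mathrm{cap}(W_0,W_{-1}\cup W_1)$; and to upgrade the tail bounds into an exponential law, and then into the full random-walk scaling limit, which amounts to a loss-of-memory statement, i.e.\ fast relaxation inside each sector relative to the time scale $\tau^{(N)}$. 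The recurring difficulty is that all of these estimates must hold uniformly in $N$ and in the coupled parameters $J,\sigma$, rather than in the classical fixed-potential, vanishing-temperature regime.

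For part (i), I would compute $\mathrm{cap}(W_0,W_{-1}\cup W_1)$ via the Dirichlet principle (test-function upper bound) and the Thomson principle (test-flow lower bound). The crossing is governed by the $N$ saddle configurations in which exactly one bond satisfies $x_i-x_{i-1}=\pi$; near each such saddle, and near the minimum $X\equiv 0$, the Hamiltonian is well approximated by a quadratic form -- the $J$-scaled discrete Laplacian, with a single flipped (negative-curvature) direction at the saddle. An Eyring--Kramers computation then assigns to each channel a contribution of order $e^{-J/\sigma^2}\,\mathrm{poly}(J,\sigma)$, and the $N$ essentially non-interacting channels add up, producing the entropic factor $N^{-1}=e^{-\log N}$. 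Feeding this into the Bovier--den Hollander mean-exit-time formula $\ee_{\mu|0}[\tau_{0\pm1}]=(1+o(1))\,\mu(W_0)/\mathrm{cap}(W_0,W_{-1}\cup W_1)$ and using $\mu(W_0)\asymp\sqrt{J/\sigma^2}\,N^{-1/2}$ from Proposition \ref{prop:estimateProbWeqk} would give $\ee_{\mu|0}[\tau_{0\pm1}]=C(J,\sigma)\,e^{J/\sigma^2-\log N}$ with $C$ polynomial; since the exit time will be asymptotically exponential, $\tau^{(N)}$ is $\tfrac12$ of this, the factor $\tfrac12$ normalizing the limit to jump at rate $1$ to each of its two neighbors.

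For the exponential law I would argue spectrally: the Dirichlet generator on $A^c$ (with, say, $A=W_{-1}\cup W_1$) should have principal eigenvalue $\lambda_1\sim 1/\tau^{(N)}$ and a spectral gap $\lambda_2-\lambda_1\gg\lambda_1$. The sharp lower bound on $\lambda_1$ is a quantitatively improved version of Lemma \ref{lem:variational_ineq} -- with the $O(J)$ slack in $\Delta H$ removed and the optimal polynomial prefactor inserted -- and follows from the capacity estimate of part (i); the gap is equivalent to a Poincar\'e inequality for the dynamics restricted to $W_0$ with relaxation time $o(\tau^{(N)})$, which, since $W_0$ away from the bad set $E$ of \eqref{eq:badevent} carries an essentially quadratic potential, should be obtainable from a Lyapunov / bounded-perturbation argument. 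Standard quasi-stationary reasoning then yields $\pp_{\mu|0}(\tau_{0\pm1}>t\,\tau^{(N)})\to e^{-t}$.

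For part (ii), I would invoke the general trace-process machinery for metastability. Its inputs are, first, the sharp capacities $\mathrm{cap}(W_k,W_{k+1})$ for all consecutive pairs, which by the approximate translation symmetry of the model -- exact up to the $O(J/N)$ energy gap between sectors, negligible in the regime $J/\sigma^2=o(N)$ -- are all equal to leading order, so the limiting walk makes symmetric $\pm1$ jumps of a common rate; and second, the negligibility of atypical excursions, namely that the process started in $W_0$ does not skip a sector when $\windnum$ first changes (a consequence of the simplicity of the saddles) and does not spend a comparable amount of time in $E$ (Proposition \ref{prop:tauEisBig}) or far from the sector minima. Assembling these, the trace of $X^{(N)}$ on $\bigcup_k W_k$, run on the clock $\tau^{(N)}$, converges to the rate-$1$ continuous-time simple random walk on $\zz$; and because the total time spent outside $\bigcup_k W_k$ up to time $S\tau^{(N)}$ is $o_{\mathrm P}(\tau^{(N)})$, the process $(n_s^{(N)})_{0\le s\le S}$ itself has the same limit. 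I expect the main obstacle to be precisely the uniform-in-$(N,J,\sigma)$ control of the capacity and relaxation estimates: one must show that the Gaussian fluctuations in the $N-1$ transverse directions (at both minima and saddles) and the additivity of the $N$ saddle channels carry only $o(1)$ errors, and that $E$ is negligible for the capacity itself and not merely for $\tau_E$; a secondary obstacle is the uniform loss of memory needed to rule out interleaved back-and-forth jumps on the $\tau^{(N)}$ time scale.
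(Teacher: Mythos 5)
You should be aware that the statement you are addressing is Conjecture \ref{conj:main_conjecture}: the paper offers no proof of it, and explicitly presents it as open, so there is no argument of the authors to compare against. Judged on its own, your text is a research program rather than a proof: every decisive step is announced as a goal (``should be obtainable'', ``I would invoke'', ``I expect the main obstacle to be'') and none is carried out. The route you sketch (Dirichlet/Thomson capacity bounds, an Eyring--Kramers prefactor, spectral gap plus quasi-stationarity, and the trace-process machinery of \cite{BovierDenHollander}) is a sensible alternative to the coupling strategy the paper itself discusses in its concluding section, but both programs stall at the same places, and your proposal does not resolve them.

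Concretely, two gaps remain. First, the sharp capacity/prefactor estimate: the classical Eyring--Kramers analysis assumes a fixed finite-dimensional landscape with nondegenerate isolated saddles and temperature tending to zero, whereas here $N\to\infty$ jointly with $J/\sigma^{2}\asymp\log N$, the energy gap between sectors is only $O(J/N)\ll\sigma^{2}$, the transition set carries the continuous global rotation symmetry (so the relevant critical manifolds are degenerate families rather than isolated points), and the transverse Gaussian determinants at the minimum and at the $N$ saddle channels are $N$-dependent quantities whose cancellation against $\mu(\windnum=0)\asymp\sqrt{J/\sigma^{2}}\,N^{-1/2}$ (Proposition \ref{prop:estimateProbWeqk}) is exactly what must be proven, uniformly in $(N,J,\sigma)$, to obtain a prefactor that is polynomial in $J,\sigma$; asserting that the errors are $o(1)$ is the conjecture, not an argument for it. Second, the loss-of-memory input (your ``spectral gap $\lambda_{2}-\lambda_{1}\gg\lambda_{1}$'', equivalently relaxation inside $\{\windnum=0\}$ in time $o(\tau^{(N)})$, including from deterministic initial data, which the trace/quasi-stationarity machinery requires) is precisely the obstruction the paper identifies for its own strategy: near the threshold $J/\sigma^{2}=(1+\varepsilon)\log N$ the process spends time where $H$ is not convex, and even for large $J/\sigma^{2}$ no a priori exit-time bound from a fixed configuration is currently available. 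Your suggested Lyapunov/bounded-perturbation argument is not carried out, and it is not clear it survives the conditioning away from the bad event $E$ of \eqref{eq:badevent}, whose negligibility is known for $\tau_{E}$ (Proposition \ref{prop:tauEisBig}) but not for capacities or for the restricted spectral gap. Until these two estimates are supplied with uniform-in-$N$ control, the statement remains a conjecture.
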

A natural strategy for proving that waiting times are exponential, as in \cite{OlivieriVares}, is showing a type of metastable mixing. One way to do this is coupling the process starting at two different points with the same winding number, say, for simplicity, $\windnum=0$. It is reasonable to assume that both processes would typically spend their time in the box $[-s,s]^N$, so they could be coupled in a time scale of $\sigma^2/s^2$. As long as this scale is much smaller than the exit time $\tau_{0\pm 1}$ we should observe a loss of memory resulting in the random walk picture laid out in Conjecture \ref{conj:main_conjecture}. 
This strategy, however, is not straight forward to implement when $N$ is big.
A first problem is that if we want to be able to consider small values of $J/\sigma^2$ (i.e. $(1+\varepsilon)\log N$), the processes will spend some of their time in a region in which $H$ is not convex, repelling one another. If we take stronger interaction (say $J/\sigma^2 > 100 \log N$), the time it takes to exit the region in which $H$ is convex is long, and there is hope that a coupling argument will work.
There is, however, a second difficulty that arises -- even though we know that the exit time from this "good" region is long starting from a random configuration (according to $\mu(\cdot|\{\windnum=0\})$), coupling requires an a priori bound on this exit time when starting from a fixed, deterministic, configuration. Such an estimate is, for the moment, beyond our reach. 
Figure \ref{fig:conjecture}
shows a simulation of the process demonstrating the random walk picture
and the exponential jump times.
\begin{figure}
\begin{tabular}{cc}
\includegraphics[scale=0.3]{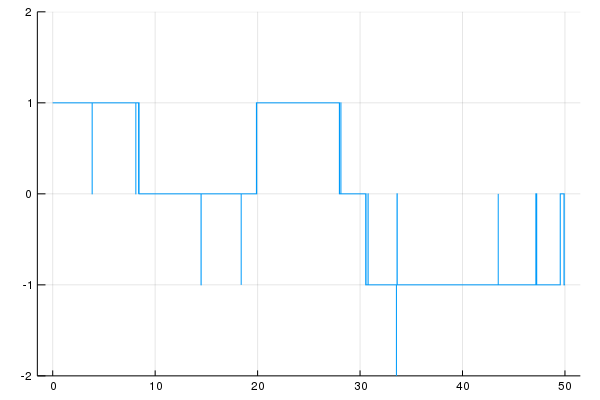} & \includegraphics[scale=0.3]{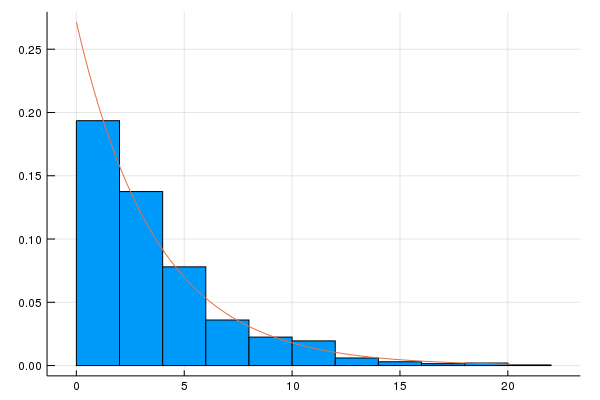}\tabularnewline
\end{tabular}

\caption{\label{fig:conjecture}Left: a simulation of the winding number for
$N=100,J=20,\sigma=3$ as a function of time. Right: a histogram of
exit times from the phase $\protect\windnum=1$, compared to an exponential
distribution, for $N=20,J=20,\sigma=3$. The code used to produce
these graphs is attached to this paper.}

\end{figure}

It is also possible to consider times beyond Conjecture \ref{conj:main_conjecture} -- we have conjectured that the random walk is symmetric due to the fact that the phases with different winding numbers look the same, as long as these winding numbers remain small (i.e., $O(1)$ as $N\rightarrow\infty$). In particular, if we give each phase $\{\windnum=k\}$ an effective energy $H(k)=\inf_{x\in\{\windnum(x) = k\}} H(x)$, these energies will differ by $O(k^2/N)$, making the effective energy landscape essentially flat. However, at longer time scales the small drift induced by these energy differences becomes significant, and when we reach times of order $N\tau^{(N)}$ the process should converge to an Ornstein-Uhlenbeck process. This conjecture is supported by Theorem \ref{thm:cltforw}, showing that $\windnum$ indeed satisfies a central limit theorem in equilibrium.

Another interesting aspect of this model that we have not at all considered in this paper is the behavior of the system
in the metastable phase. Consider, for example, the correlation $\cos\left(X_{1}-X_{\floor{N/2}}\right)$
for large $J$. When the winding number is $0$ all rotors align, and
we expect a positive correlation. However, in the metastable phase
with $\windnum=1$ this is no longer the case -- if we believe that
the typical behavior is when the energy is minimal, $X_{1}$ and $X_{\floor{N/2}}$
should point in opposite directions, and the correlation would be
negative. This effect is indeed observed in simulations, comparing
the $\windnum=1$ phase, the $\windnum=0$ phase, and the system with
free boundary conditions evolving from the same initial configurations.
The complete code is available as a supplementary file to this paper.

One more observable that could be studied in the metastable state
is the response to an external field, i.e., replacing the Hamiltonian
in equation \eqref{eq:hamiltonian} by 
\[
H(x_{1},\dots,x_{N})=-J\sum\cos(x_i - x_{i-1})-B\sum\cos(x_i).
\]
In the $\windnum=0$ phase, where all rotors are aligned, adding a
field will make all rotors point in the direction of the field. In
the $\windnum=1$ phase, however, the rotors cannot all point in the
same direction (e.g., $X_{1}$ and $X_{\floor{N/2}}$ are inverse).
This effect does not allow the rotors to align all together with the
field, so we expect a smaller total magnetization. This behavior is
also verified in simulations.

Metastability effects related to topological phases could be studied
in more models. In one dimension, other interactions could be studied,
as well as models with different topologies (e.g., rotors that live
on an 8 shape). Finally, understanding the dynamics and the metastability
properties of models in higher dimensions, and in particular the creation
and annihilation of vortex pairs in the two dimensional XY model and the equivalent of the regime $\log N<J/\sigma^2<N$ above the Kosterlitz-Touless critical temperature,
is a very interesting (and very challenging) problem.

\appendix

\section{Proof of Lemma \ref{lem:AdP}} \label{sec:AdPproof}

Let $A\subset \spin^N$  be a closed set. For continuous functions in $H_A = \{f\in L^2(\mu), f\equiv 0 \text{ on } A\}$ we let $(P_t)_{t\geq 0}$ be the semi-group acting through $P_t(f)(x) = \ee_x[f(X(t\wedge\tau_A))]$, whose action can be further extended on $H_A$ (see \eqref{eq:contraction}). We emphasize that if $f$ vanishes on $A$, also $\ee_x[f(X(t\wedge\tau_A))]$ vanishes on $A$. We denote by $\Vert f \Vert^2_{H_A} = \mu(f^2\mathbf{1}_{A^c})$ the induced norm in $H_A$ and $\langle , \rangle_\mu$ its associated scalar product.

We start with this lemma:
\begin{lem} \label{lem:operator}
The operator $P_t$ is a strongly continuous symmetric contraction on $H_A$ and if $L^A$ denotes its negative definite self-adjoint generator, then $P_t = e^{tL^A}$ where the exponential is defined via the spectral theorem. Moreover, the domain of $L^A$ contains $C^\infty_c(A^c)$ and for all $f\in C^\infty_c(A^c)$, $L^A f = L f$.
\end{lem}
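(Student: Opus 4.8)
The plan is to establish Lemma \ref{lem:operator} in four stages: (i) show $(P_t)$ is a well-defined strongly continuous symmetric contraction semigroup on $H_A$; (ii) invoke Hille--Yosida / spectral theory to produce a negative self-adjoint generator $L^A$ with $P_t = e^{tL^A}$; (iii) identify $C^\infty_c(A^c)$ as a subset of the domain of $L^A$; and (iv) check that $L^A$ coincides with the elementary differential operator $L$ on that subspace. The key probabilistic input throughout is the strong Markov property of the diffusion $X$ together with reversibility with respect to $\mu$.

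First I would record the contraction and symmetry properties. For $f$ continuous and vanishing on $A$, the formula $P_t f(x) = \ee_x[f(X(t\wedge\tau_A))]$ makes sense pointwise; by the Markov property $P_{t+s} = P_t P_s$, and Jensen gives $\mu(|P_tf|^2\One_{A^c}) \le \mu(\ee_\cdot[f(X(t\wedge\tau_A))^2]\One_{A^c}) \le \mu(\ee_\cdot[f(X(t\wedge\tau_A))^2]) = \mu(f^2)$ using stationarity of $\mu$ and the fact that $f$ vanishes on $A$ so only the $A^c$ part contributes, hence $\Vert P_t f\Vert_{H_A}\le \Vert f\Vert_{H_A}$ and $P_t$ extends to a contraction on all of $H_A$ by density of continuous functions. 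Symmetry $\langle P_t f,g\rangle_\mu = \langle f,P_t g\rangle_\mu$ for $f,g\in H_A$ follows from reversibility of $X$ under $\mu$: the unkilled semigroup is $\mu$-symmetric, and killing upon hitting the closed set $A$ preserves symmetry (one can argue via the standard time-reversal identity $\ee_\mu[f(X_0)g(X_t)\One_{\tau_A > t}] = \ee_\mu[f(X_t)g(X_0)\One_{\tau_A>t}]$, which holds because reversing the path swaps $X_0$ and $X_t$ while leaving the event $\{\tau_A > t\}$ invariant). Strong continuity $P_t f \to f$ in $H_A$ as $t\downarrow 0$ is obtained first for continuous $f$ via dominated convergence (since $X(t\wedge\tau_A)\to X_0$ a.s.\ and $\tau_A > 0$ $\pp_x$-a.s.\ for $x\in A^c$), then extended to $H_A$ by the uniform boundedness of the contractions $P_t$.

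Given a strongly continuous symmetric contraction semigroup on the Hilbert space $H_A$, the spectral theorem (equivalently Hille--Yosida for self-adjoint semigroups) immediately yields a unique negative self-adjoint generator $L^A$ with $P_t = e^{tL^A}$; this is standard and requires no new estimate. The substantive remaining point is step (iii)--(iv): for $f\in C^\infty_c(A^c)$ we must show $t^{-1}(P_t f - f)$ converges in $H_A$ as $t\downarrow 0$ to $Lf = -\grad H\grad f + \tfrac{\sigma^2}{2}\Laplacian f$. Since $f$ is smooth and compactly supported in the open set $A^c$, there is a neighbourhood of $\operatorname{supp} f$ at positive distance from $A$; by Dynkin's formula for the diffusion (\ref{eq:ito}), $\ee_x[f(X_{t\wedge\tau_A})] = f(x) + \ee_x[\int_0^{t\wedge\tau_A} Lf(X_s)\,\dd s]$, and because $Lf$ is bounded and supported away from $A$, the contribution of $\{\tau_A < t\}$ is controlled: starting from $x$ in the support, $\tau_A$ is bounded below by the time to travel a fixed distance, whose probability of being $<t$ is $o(t)$ uniformly, while starting from $x$ outside the support both $f(x)=0$ and $Lf(x)=0$. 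Hence $t^{-1}(P_tf(x)-f(x)) \to Lf(x)$ pointwise, uniformly in $x$, and since everything is supported in a fixed compact set this upgrades to convergence in $L^2(\mu) = $ convergence in $H_A$; therefore $f\in \operatorname{dom}(L^A)$ and $L^A f = Lf$.

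The main obstacle I anticipate is the careful handling of the killing at the boundary of the closed set $A$ in steps (i) and (iv): one must ensure that the estimate $\pp_x(\tau_A < t) = o(t)$ holds uniformly over the (compact) support of $f$, which uses that the diffusion coefficient $\sigma$ is nondegenerate and the drift $\grad H$ is bounded, so the process cannot reach the positive distance to $A$ in time $t$ except with super-linearly small probability; and, for symmetry, that the time-reversal argument genuinely applies to the killed semigroup even though $A$ may have irregular boundary — here the hypothesis that $A$ is closed (so $\{\tau_A > t\}$ is well-behaved under path reversal, the hitting time of a closed set being the same forwards and backwards along a continuous path) is exactly what is needed. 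These are precisely the ``technical complications'' alluded to in the main text; none of them requires a new idea beyond standard diffusion theory, but they must be written out with care.
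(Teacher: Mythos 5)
Your proposal is correct and follows essentially the same route as the paper's proof: Jensen's inequality plus stationarity for the contraction, reversibility (time reversal) for symmetry of the killed semigroup, dominated convergence and density for strong continuity, the spectral theorem for $P_t = e^{tL^A}$, and Dynkin's formula with the stopped process to identify $L^A f = Lf$ on $C^\infty_c(A^c)$. The only difference is cosmetic: where the paper delegates symmetry to \cite{CZ95} and the exponential formula to \cite{FuOsTa11}, you sketch the time-reversal identity and the Hille--Yosida argument explicitly, and you spell out the $o(t)$ control near $A$ that the paper leaves implicit after equation \eqref{eq:martingaleEq}.
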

\begin{proof}
We now verify that $(P_t)$ is a strongly continuous symmetric contraction semi-group on $H_A$. For the contraction property, observe that by Jensen's inequality,
\begin{equation} \label{eq:contraction}
\Vert  P_t f\Vert^2_{H_A} = \int_{A^c} \mu(x) \ee_x\left[f\left(X(t) \right)\mathbf{1}_{t<\tau_A}\right]^2 \dd x \leq \int_{\spin^N} \mu(x) \ee_x\left[f\left(X(t)\right)^2\right]  = \Vert f \Vert_{H_A}^2.
\end{equation}
where the last equality comes from the fact that $\mu$ is a reversible measure of $X$. To prove symmetry, one can for example follow the lines of \cite[p.35]{CZ95} and use again that $\mu$ is a reversible measure of $X$.
To show strong continuity, let $f$ be a continuous and bounded function in $H_A$, so $f(X(t\wedge\tau_A))$ is c\`adl\`ag and therefore $P_t f$ converges pointwise to $f$ as $t\to 0$. By dominated convergence, this implies that $P_t f\to f$ in $H_A$, which can be further extended to any $f\in H_A$ by density of continuous and bounded functions in $H_A$ and the contraction property of $P_t$. The exponential formula follows from \cite[Lemma 1.3.2.]{FuOsTa11}.

To prove the last claim of the lemma, we use Dynkin's formula, from which we obtain that for all $f\in C^\infty_c(A^c)$,
\begin{equation}\label{eq:martingaleEq}
P_t f(x) -f(x) = \ee_x\left[\int_0^t L f (X(s\wedge \tau_A))\dd s\right],
\end{equation}
and hence $L^A f = L f$ on $C^\infty_c(A^c)$.
\end{proof}

We now introduce the symmetric positive definite bilinear form $\mathcal E^A$ associated to $P_t$, which satisfies $\mathcal E^A(f,g) = \langle -L^A f,g\rangle_\mu$ for all $f$ in the domain of $L^A$ and $g$ in the domain of $\mathcal E^A$.
\begin{lem} \label{lem:eigenvaluevalue} Assume that $A$ has a smooth boundary and denote by $\lambda_0$ the smallest eigenvalue of $-L^A$. Then,
\begin{equation} \label{eq:eigenvaluevalue}
\lambda_0 = \inf_{f\in C^\infty_c(A^c),f\neq 0} \frac{\langle -L^A f, f\rangle_\mu}{\langle f,f\rangle_\mu} = \inf_{f\in C^\infty_c(A^c),f\neq 0} \frac{\mathcal E^A(f,f)}{\langle f,f\rangle_\mu}.
\end{equation}
\end{lem}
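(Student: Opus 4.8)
The plan is to reduce everything to the variational (Rayleigh--Ritz) description of the bottom of the spectrum of $-L^A$, the only real content being that one may restrict the Rayleigh quotient to the core $C^\infty_c(A^c)$. The second equality in \eqref{eq:eigenvaluevalue} is immediate: by Lemma \ref{lem:operator} we have $C^\infty_c(A^c)\subseteq\mathrm{Dom}(L^A)$, and by the very definition of $\mathcal E^A$ this gives $\mathcal E^A(f,f)=\langle -L^A f,f\rangle_\mu$ for every such $f$, so the two infima run over the same expression.

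For the first equality, recall that for the non-negative self-adjoint operator $-L^A$, with form domain $\mathcal F:=\mathrm{Dom}(\mathcal E^A)$, the spectral theorem gives the Rayleigh--Ritz identity $\inf\mathrm{spec}(-L^A)=\inf_{f\in\mathcal F\setminus\{0\}}\mathcal E^A(f,f)/\langle f,f\rangle_\mu$. Since $\spin^N$ is compact and $A$ has smooth boundary, $A^c$ is a bounded domain with smooth boundary, hence $-L^A$ has compact resolvent; its spectrum is then discrete, the infimum is attained, and it equals the smallest eigenvalue $\lambda_0$. Because $C^\infty_c(A^c)\subseteq\mathcal F$, we get for free that $\lambda_0\le\inf_{f\in C^\infty_c(A^c)\setminus\{0\}}\mathcal E^A(f,f)/\langle f,f\rangle_\mu$, and only the reverse inequality remains.

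For the reverse inequality it suffices to show that $C^\infty_c(A^c)$ is dense in $\mathcal F$ for the form norm $f\mapsto(\mathcal E^A(f,f)+\langle f,f\rangle_\mu)^{1/2}$; then the infimum over $\mathcal F$ equals that over $C^\infty_c(A^c)$. This is the step that genuinely uses the smoothness of $\partial A$: one identifies $\mathcal E^A$ as the part of the Dirichlet form $\mathcal E$ on the open set $A^c$ (see, e.g., \cite{FuOsTa11}). Since $A$ has smooth boundary, every boundary point of $A$ is regular for $A$ (in particular the Poincar\'e cone condition holds), so the process killed at $\tau_A$ coincides with the process killed upon exiting $A^c$, whose Dirichlet form is precisely the closure of $(\mathcal E,C^\infty_c(A^c))$ --- equivalently $\mathcal F$ is the weighted space $H^1_0(A^c)$, in which $C^\infty_c(A^c)$ is dense by construction. (Alternatively one argues by hand: any $f\in\mathcal F$ is approximated in the form norm by $f$ truncated away from $\partial A$ and then mollified, the smoothness of $\partial A$ guaranteeing the truncation does not blow up the energy.) I expect this identification of the killed form with $H^1_0(A^c)$ --- that is, the core property of $C^\infty_c(A^c)$ --- to be the main obstacle; granted it, Rayleigh--Ritz together with the trivial second equality concludes the proof.
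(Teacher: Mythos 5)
Your argument is correct and follows the same skeleton as the paper's proof: both reduce the lemma to the Rayleigh--Ritz characterization of the bottom of the spectrum of $-L^A$ together with the key fact that $C^\infty_c(A^c)$ is a core for $\mathcal E^A$, i.e.\ $D(\mathcal E^A)=H^1_0(A^c)$. Where you genuinely diverge is in how that core property is established. The paper does it concretely: it compares the probabilistic resolvent $G_\alpha=\int_0^\infty e^{-\alpha t}P_t\,\dd t$ with the Dirichlet Green operator $G'_\alpha$ (the weak solution of $(L-\alpha)u=f$ in $H^1_0(A^c)$), uses elliptic regularity for the smooth boundary together with uniqueness of the martingale problem (via \cite{BovierDenHollander}) to get $G_\alpha=G'_\alpha$, hence $G_\alpha(H_A)\subset H^1_0(A^c)$, and concludes because $G_\alpha(H_A)$ is dense in $D(\mathcal E^A)$ for the form norm. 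You instead appeal to the theory of parts of regular Dirichlet forms in \cite{FuOsTa11}: the killed semigroup corresponds to the part of $\mathcal E$ on the open set $A^c$, whose form domain is the closure of $C^\infty_c(A^c)$. That route is legitimate and in fact more general (the part of a regular form is regular for any open set, so the smoothness of $\partial A$ is less essential in your argument than you suggest), whereas the paper's argument is more hands-on but leans on elliptic regularity and the resolvent identification. Two small points to tighten: boundary regularity is not what identifies ``killed at $\tau_A$'' with ``killed upon exiting $A^c$'' --- these coincide by definition of $\tau_A$; what does need checking is that the probabilistically defined $P_t$ and $\mathcal E^A$ agree with the part form and that $\mathcal E^A=\mathcal E$ on $C^\infty_c(A^c)$, which follows from $L^Af=Lf$ in Lemma \ref{lem:operator}. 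Also, your compact-resolvent step quietly presupposes $D(\mathcal E^A)\subset H^1$, i.e.\ the identification you prove afterwards; this is harmless, since neither the statement nor its use in Proposition \ref{th:asselah} actually requires the infimum of the spectrum to be an attained eigenvalue.
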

\begin{proof}
First observe that \eqref{eq:eigenvaluevalue} holds if $C^\infty_c(A^c)$ is replaced by the domain of $L^A$. We now explain why it is possible to exchange them.
 
 For $\alpha>0$, let $H_0^1(A^c)$ denote the closure of $C^\infty_c(A^c)$ for the norm induced by $\mathcal E^A_\alpha = \mathcal E^A + \alpha\langle,\rangle$. Note that the closure does not depend on $\alpha$. We also let
\[G_\alpha = \int_0^\infty e^{-t\alpha} P_t,\]
be the strongly continuous resolvent associated to $P$ and $G'_\alpha$ be the Green function associated to the equation $(L-\alpha)u = f$ with Dirichlet boundary conditions, i.e. the bounded operator in $H_A$ such that for all function $f\in H_A$, $G'_\alpha f \in H_0^1(A^c)$ is the unique solution (in the weak sense) of $(L-\alpha)u = f$. Now let $f\in C^\infty_c(A^c)$. It is a well known fact that since $A$ has a smooth boundary, then $G'_\alpha f \in C^\infty_c(A^c)$. Moreover, we have $G'_\alpha f = G_\alpha f$ by \cite[Theorem 7.15]{BovierDenHollander}\footnote{Note that uniqueness of the martingale problem comes from boundedness of $\grad H$.}. Therefore, the two bounded operators $G_\alpha$ and $G'_\alpha$ both agree on the dense subset $C^\infty_c(A^c)$, hence $G_\alpha=G'_\alpha$ on $H_A$. In particular, this implies that $G_\alpha(H_A)\subset H_0^1(A^c)$.

 Let $D(\mathcal E^A)$ denote the domain of $\mathcal E^A$. As $G_\alpha(H_A)$ is dense in $D(\mathcal E^A)$ for $\mathcal E^A_\alpha$, we finally obtain that $D(\mathcal E^A) = H_0^1(A^c)$. Equation \eqref{eq:eigenvaluevalue} thus follows from $C^\infty_c(A^c)$ being dense in $H_0^1(A^c)$ for $\mathcal E^A_\alpha$ and the inclusion of the domain of $A$ in $D(\mathcal E^A)$.
\end{proof}

\begin{definition} \label{def:regularSet} Let $\tau_B^Y$ denote the hitting time of $B$ for a continuous process $(Y_t)$.
Given two Borel sets $E$ and $F$, a set  $E$ is said to be regular for $F$ with respect to $Y$ if for every $x\in E$, $\pp_x(\tau^Y_F =0 ) = 1$.
\end{definition}

Lemma \ref{lem:AdP} is an immediate corollary of the following proposition. 
\begin{proposition} \label{th:asselah} Let $A$ be a closed domain of $\spin^N$ such that the boundary of $A$ is regular for the interior of $A$ (denoted by $\mathring A$) with respect to $X$. Then 
\begin{equation} \label{eq:asselahBound}
\pp_\mu(\tau_A > t) \leq e^{-\lambda_0(A) t},
\end{equation}
where $\lambda_0(A)$ is given in equation \eqref{eq:eigenvaluevalue}.
\end{proposition}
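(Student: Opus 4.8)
The plan is to deduce \eqref{eq:asselahBound} from the spectral theory of the killed generator $L^A$ furnished by Lemmas \ref{lem:operator} and \ref{lem:eigenvaluevalue}, after first comparing $A$ with a slightly larger set having smooth boundary. The cornerstone is the identity $\pp_\mu(\tau_A>t)=\langle P_t\One_{A^c},\One_{A^c}\rangle_\mu$. To establish it, I would note that $\One_{A^c}\in H_A$ and approximate it in $H_A$ by the bounded continuous functions $f_n(x)=\min\{1,n\,\mathrm{dist}(x,A)\}$, which vanish on $A$ and increase to $\One_{A^c}$; for these, $P_tf_n(x)=\ee_x[f_n(X(t\wedge\tau_A))]=\ee_x[f_n(X(t))\One_{\{\tau_A>t\}}]$, because $X(\tau_A)\in A$ on $\{\tau_A\le t\}$ by path-continuity and closedness of $A$. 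Letting $n\to\infty$ by monotone convergence on the right and using the $H_A$-continuity of $P_t$ on the left gives $P_t\One_{A^c}=\pp_\cdot(\tau_A>t)$ in $L^2(\mu)$; since $P_t$ stabilises $H_A$ (Lemma \ref{lem:operator}) this function vanishes on $A$, so $\int\pp_x(\tau_A>t)\,\mu(\dd x)=\langle P_t\One_{A^c},\One\rangle_\mu=\langle P_t\One_{A^c},\One_{A^c}\rangle_\mu$. It is here that the regularity of $A$ (Definition \ref{def:regularSet}) is used: it forces $\tau_A=0$ $\pp_x$-a.s.\ for $x\in A$, hence $P_t$ really maps $H_A$ into $H_A$.

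Next I would treat the case in which $\partial A$ is smooth. By Lemma \ref{lem:operator}, $-L^A$ is non-negative self-adjoint with $P_t=e^{tL^A}$; since $\spin^N$ is compact and $A^c$ is open, $-L^A$ has compact resolvent, so its spectrum is discrete, and by Lemma \ref{lem:eigenvaluevalue} it is contained in $[\lambda_0(A),\infty)$ with $\lambda_0(A)$ the variational quantity of \eqref{eq:eigenvaluevalue}. The spectral theorem then yields, for every $g\in H_A$, $\langle P_tg,g\rangle_\mu=\int_{[\lambda_0(A),\infty)}e^{-t\lambda}\,\dd\langle E_\lambda g,g\rangle_\mu\le e^{-t\lambda_0(A)}\,\|g\|_{H_A}^2$; applying this to $g=\One_{A^c}$, whose squared $H_A$-norm is $\mu(A^c)\le 1$, and combining with the identity from the previous paragraph gives $\pp_\mu(\tau_A>t)\le e^{-t\lambda_0(A)}$.

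For a general (merely regular) closed set $A\subsetneq\spin^N$ I would reduce to the smooth case by monotonicity. Pick a closed set $\widetilde A$ with smooth boundary and $A\subseteq\widetilde A\subsetneq\spin^N$ --- for instance a generic sublevel set of a smooth approximation of $x\mapsto\mathrm{dist}(x,A)$, which is smooth for almost every level by Sard's theorem. Smooth boundary points are regular for the non-degenerate diffusion $X$, so the two previous steps apply to $\widetilde A$ and give $\pp_\mu(\tau_{\widetilde A}>t)\le e^{-t\lambda_0(\widetilde A)}$. On the other hand $\widetilde A\supseteq A$ gives $\tau_{\widetilde A}\le\tau_A$ pathwise, hence $\pp_\mu(\tau_A>t)\le\pp_\mu(\tau_{\widetilde A}>t)$; and $\widetilde A^c\subseteq A^c$ gives $C^\infty_c(\widetilde A^c)\subseteq C^\infty_c(A^c)$, so --- recalling that $\mathcal E^A$ and $\mathcal E$ agree on $C^\infty_c(A^c)$ by Lemma \ref{lem:operator} --- the variational formula \eqref{eq:eigenvaluevalue} yields $\lambda_0(\widetilde A)\ge\lambda_0(A)$. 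Chaining the inequalities gives $\pp_\mu(\tau_A>t)\le e^{-t\lambda_0(\widetilde A)}\le e^{-t\lambda_0(A)}$, which is \eqref{eq:asselahBound}.

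The step I expect to be the main obstacle is the identification of the survival probability with the $L^2$-semigroup action together with the accompanying spectral identity: one must check that the $H_A$-extension of $P_t$ genuinely describes the diffusion killed on $A$, that $P_t$ preserves $H_A$ (where regularity enters), and --- most importantly --- that $C^\infty_c(A^c)$ is a form core, so that the variational infimum in \eqref{eq:eigenvaluevalue} is exactly the bottom of the spectrum of $-L^A$. This last fact is the content of Lemma \ref{lem:eigenvaluevalue} and is where smoothness of $\partial A$ is genuinely needed, which is precisely why the general case has to be passed through the enclosing smooth set $\widetilde A$ rather than handled directly.
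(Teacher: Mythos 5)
Your Steps 1 and 2 (the identification $\pp_\mu(\tau_A>t)=\langle P_t\One_{A^c},\One_{A^c}\rangle_\mu$ and the spectral bound via Lemma \ref{lem:eigenvaluevalue}) follow the same route as the paper and are fine. The genuine gap is in your reduction of a general regular $A$ to the smooth case. From $A\subseteq\widetilde A$ you correctly get $\tau_{\widetilde A}\le\tau_A$ pathwise, but this gives $\{\tau_{\widetilde A}>t\}\subseteq\{\tau_A>t\}$, i.e.\ $\pp_\mu(\tau_{\widetilde A}>t)\le\pp_\mu(\tau_A>t)$ --- the \emph{opposite} of the inequality you assert. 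Enlarging the target set makes it easier to hit, so it can only decrease the survival probability; a bound on $\pp_\mu(\tau_{\widetilde A}>t)$ for a single enclosing smooth set therefore says nothing about $\pp_\mu(\tau_A>t)$, and your chain of inequalities collapses at its first link. To get an upper bound on $\pp_\mu(\tau_A>t)$ by comparison you must either shrink the set (as the paper does: an increasing sequence $A_n$ of smooth closed sets inside $A$, so that $\pp_\mu(\tau_A>t)\le\pp_\mu(\tau_{A_n}>t)$, followed by a limiting argument), or, if you insist on outer sets, take a \emph{decreasing sequence} $\widetilde A_n\downarrow A$ with $\bigcap_n\widetilde A_n=A$ and prove $\tau_{\widetilde A_n}\uparrow\tau_A$ a.s.\ by path continuity, so that $\pp_\mu(\tau_{\widetilde A_n}>t)\uparrow\pp_\mu(\tau_A>t)$ while $\lambda_0(\widetilde A_n)\ge\lambda_0(A)$; that extra limit argument is absent from your write-up and is exactly what a single fixed $\widetilde A$ cannot replace.

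A second, related issue: you claim the regularity hypothesis (Definition \ref{def:regularSet}) is used to ensure $\tau_A=0$ from points of $A$, hence that $P_t$ preserves $H_A$. That is automatic (if $X_0=x\in A$ then $\tau_A=0$ by definition), so in your argument the regularity hypothesis is never actually used --- a warning sign. In the paper's proof it enters at a different and essential place: after approximating $A$ from the inside by smooth sets one only controls $\tau_{\mathring A}$, and the regularity of $\partial A$ for $\mathring A$, combined with the strong Markov property at $\tau_A$, is what gives $\tau_{\mathring A}=\tau_A$ a.s.\ and lets one pass from the smooth inner approximations back to $A$ itself.
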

\begin{proof}
We can in fact assume that $A$ has smooth boundary. If not, one can always approximate $A$ with an increasing sequence $A_n$ of closed subset with smooth boundary and observe that: (i) $\lambda_0(A_n)$ in \eqref{eq:eigenvaluevalue} is non-increasing in $n$; (ii) $\pp_\mu(\tau_{A_n} > t) \to \pp_\mu(\tau_{\mathring A} > t)$ by continuity of $X$; (iii) the hypothesis of regularity of the boundary and the strong Markov property for $X$ imply that a.s. $\tau_{\mathring A} = \tau_{A}$.

Then, by the spectral theorem, Lemma \ref{lem:eigenvaluevalue} and Cauchy-Schwarz inequality, we have that for all $f\in C^\infty_c(A^c)$,
\[\int \mu(\dd x) P_t f(x)  = \langle e^{tL^A} f,\mathbf{1}_{A^c}\rangle_\mu = \int_{\lambda_0(A_n)}^\infty e^{-tu}\, \dd \langle E_u^{-L^A} f,\mathbf{1}_{A^c}\rangle_\mu \leq e^{-t\lambda_0(A_n)} \Vert f \Vert_{H_A}, \]
where $E^{-L^A}$ is the spectral measure associated to $-L^A$. 
Now, let $f_n\in\mathcal C^\infty_c(A^c)$ be a sequence of functions converging to $\mathbf{1}_{A^c}$ in $L^2(\mu)$. Since $P_t$ is a contraction, we have that $P_t f_n \to P_t \mathbf{1}_{A^c}$ in $L^2(\mu)$, which along with the previous bound proves the proposition since $\pp_\mu(\tau_A>t)=\ee_\mu\left(\One_{A^c}\left(X_{t\wedge \tau_A}\right)\right)$.
\end{proof}

\begin{lem} \label{lem:regularBoundary}
If $E$ is regular for $F$ with respect to Brownian motion, then $E$ is regular for $F$ with respect to the diffusion $X$ given in equation \eqref{eq:diffusion}.
\end{lem}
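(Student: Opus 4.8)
The plan is to compare the diffusion $X$ with driftless Brownian motion through Girsanov's theorem, exploiting that regularity of $E$ for $F$ is a property of the germ $\sigma$-field. Since $H_N$ is smooth on the compact manifold $\spin^N$, the drift $-\grad H_N$ is bounded, so the law of $X$ and the law of the Brownian motion $\sigma B$ (with generator $\tfrac{\sigma^2}{2}\Laplacian$) are mutually absolutely continuous on every finite time interval; hence an event which has full probability under $\sigma B$ started from each point of $E$ must have full probability under $X$ started from each point of $E$.

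Concretely, fix $x\in E$ and $t>0$, and let $\pp_x$ and $\mathrm Q_x$ be the laws on $C([0,\infty),\spin^N)$ of, respectively, the solution of \eqref{eq:diffusion} started at $x$ and of $\sigma B$ started at $x$, both carried on the canonical filtration $(\mathcal F_s)$ completed so as to satisfy the usual conditions. Because $\grad H_N$ is bounded, Novikov's condition holds trivially, and Girsanov's theorem gives $\pp_x\sim\mathrm Q_x$ on $\mathcal F_t$, with Radon--Nikodym density the usual exponential martingale; in particular $\pp_x$ and $\mathrm Q_x$ have the same null sets in $\mathcal F_t$. On the other hand, $\{\tau_F^Y=0\}=\bigcap_{n\ge1}\{\tau_F^Y<1/n\}$ belongs to $\bigcap_n\mathcal F_{1/n}=\mathcal F_{0+}\subseteq\mathcal F_t$; this is elementary when $F$ is open --- the case used in Proposition \ref{th:asselah}, where $F=\mathring A$ --- and holds for general Borel $F$ by the d\'ebut theorem. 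Finally, by Brownian scaling the laws of $\sigma B$ and of $B$ run at speed $\sigma^2$ coincide, so $\mathrm Q_x(\tau_F=0)$ is precisely the probability that standard Brownian motion from $x$ hits $F$ at time $0$; the hypothesis that $E$ is regular for $F$ with respect to Brownian motion therefore says that $\mathrm Q_x(\tau_F=0)=1$ for all $x\in E$. Since $\{\tau_F=0\}^c\in\mathcal F_t$ and $\pp_x\ll\mathrm Q_x$ on $\mathcal F_t$, we conclude $\pp_x(\tau_F=0)=1$; as $x\in E$ was arbitrary, $E$ is regular for $F$ with respect to $X$.

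I expect no genuine obstacle here. The only points needing routine care are the $\mathcal F_{0+}$-measurability of $\{\tau_F=0\}$ and the applicability of Girsanov's theorem on the compact manifold $\spin^N$ (e.g.\ by lifting the dynamics to the universal cover $\rr^N$); both are standard consequences of $\grad H_N$ being bounded together with the completeness of the filtration. The same absolute continuity gives the converse implication as well, so regularity with respect to Brownian motion and with respect to $X$ are actually equivalent.
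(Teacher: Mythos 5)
Your proof is correct and takes essentially the same route as the paper: a Girsanov comparison of $X$ with $\sigma B$ (valid since $\grad H$ is bounded on the compact torus $\spin^N$), combined with Brownian scaling to reduce to the hypothesis on $B$. The only difference is the final step: you transfer the event $\{\tau_F=0\}\in\mathcal{F}_{0+}$ directly via equivalence of the two laws on $\mathcal{F}_t$, whereas the paper sidesteps the germ-field measurability discussion by a quantitative Cauchy--Schwarz estimate, $\pp_x(\tau^X_F>\varepsilon)\le C_0\,\pp_x\bigl(\tau^B_F>\sigma^{2}\varepsilon\bigr)^{1/2}$, and then lets $\varepsilon\to0$; both implementations are fine.
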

\begin{proof}
Let $\varepsilon >0$ and $x\in E$. By Girsanov's theorem, $\sigma B$ has the law of $X$ on $[0,\varepsilon]$ under the the change of measure on $\pp$ with density $D_\varepsilon = \exp\{\int_0^\varepsilon H(B_s) \dd B_s - \frac{1}{2} \int_0^\varepsilon H(B_s)^2 \dd s\}$.
Therefore, since $H$ is bounded we obtain by Cauchy-Schwarz inequality that there is some $C_0> 0$ such that for all $\varepsilon < 1$,
\[\pp_x(\tau^X_F > \varepsilon) \leq \pp_x(\tau^{\sigma B}_F > \varepsilon)^{1/2} \ee_x[D_\varepsilon^2]^{1/2} \leq C_0 \pp_x(\tau^B_F > \sigma^{2} \varepsilon)^{1/2} \to C_0 \pp_x(\tau^B_F > 0)^{1/2} = 0,
\]
as $\varepsilon \to 0$, where we have used the Brownian motion scaling property $(\sigma B_t) \eqlaw (B_{\sigma^2t})$.  Hence $\pp(\tau^X_F = 0) = 1$.
\end{proof}

\begin{corollary}\label{cor:poincarecone}
If $A^c$ satisfies the Poincar\'e cone condition (see the beginning of Section \ref{sec:proofOfprop:upperBTauA} for a definition), then $A$ satisfies the assumption of Lemma \ref{th:asselah}.
\end{corollary}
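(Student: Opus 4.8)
The plan is to reduce the statement, via Lemma~\ref{lem:regularBoundary}, to the classical fact that the Poincar\'e cone condition implies regularity for Brownian motion, which in turn I would prove using Brownian scaling and Blumenthal's $0$--$1$ law. Recall that the hypothesis of Lemma~\ref{th:asselah} is that $\partial A$ be regular for $\mathring A$ with respect to $X$, i.e.\ that $\pp_x(\tau^X_{\mathring A}=0)=1$ for every $x\in\partial A$. Since $\partial A=\partial(A^c)$ and $\mathring A$ is the interior of $A$, Lemma~\ref{lem:regularBoundary} reduces this to showing $\pp_x(\tau^B_{\mathring A}=0)=1$ for every $x\in\partial A$, where $B$ is a standard Brownian motion.

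To establish that, I would fix $x\in\partial A$. By the Poincar\'e cone condition there is a cone with apex $x$, axis $v$ and half-angle $\theta>0$ contained in $\mathring A$; shrinking the half-angle slightly I may take it open, and truncating it at a small radius $\rho>0$ I obtain an open set $\mathcal C\subseteq\mathring A$ sitting inside a single coordinate chart of $\spin^N$, so that near $x$ Brownian motion on the torus is just Brownian motion on $\rr^N$. Writing $\mathcal C^{\infty}$ for the untruncated open cone with the same apex, axis and angle, scale invariance of $\mathcal C^{\infty}$ about $x$ together with Brownian scaling gives $\pp_x(B_t\in\mathcal C^{\infty})=\pp_x(B_1\in\mathcal C^{\infty})=:p$ for every $t>0$, and $p>0$ because $\mathcal C^{\infty}$ has nonempty interior and the Gaussian density is everywhere positive. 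Since $\pp_x(|B_t-x|\ge\rho)\to0$ as $t\to0$, this yields $\pp_x(B_t\in\mathcal C)\ge p-\pp_x(|B_t-x|\ge\rho)\ge p/2$ for all small enough $t>0$, hence $\pp_x(\tau^B_{\mathcal C}\le t)\ge p/2$ for all such $t$.

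Letting $t\downarrow0$, the events $\{\tau^B_{\mathcal C}\le t\}$ decrease to $\{\tau^B_{\mathcal C}=0\}$, so $\pp_x(\tau^B_{\mathcal C}=0)\ge p/2>0$; since $\mathcal C$ is open this event lies in the germ $\sigma$-field of $B$ at time $0$, and Blumenthal's $0$--$1$ law upgrades the probability to $1$. Because $\mathcal C\subseteq\mathring A$ we have $\tau^B_{\mathring A}\le\tau^B_{\mathcal C}$, so $\pp_x(\tau^B_{\mathring A}=0)=1$, which is exactly regularity of $\partial A$ for $\mathring A$ with respect to Brownian motion; Lemma~\ref{lem:regularBoundary} then transfers it to $X$, giving the assumption of Lemma~\ref{th:asselah}. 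I do not expect a serious obstacle here, since the argument is classical, but two points need a little attention: passing to a single coordinate chart of the torus so that the Euclidean cone picture applies verbatim, and choosing the truncation radius $\rho$ small enough that $\mathcal C\subseteq\mathring A$ while still keeping $\pp_x(B_t\in\mathcal C)$ bounded below uniformly for small $t$.
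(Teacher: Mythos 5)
Your proof is correct and follows essentially the same route as the paper: reduce, via Lemma~\ref{lem:regularBoundary} and the identification of the torus Brownian motion with Euclidean Brownian motion near the apex (projection/chart), to the classical fact that a cone at a boundary point forces regularity for Brownian motion. The only difference is that you prove that classical fact inline by Brownian scaling and Blumenthal's $0$--$1$ law, whereas the paper simply cites the proof of Proposition~II.1.13 in Bass, which rests on the same argument.
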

\begin{proof}
The proof of \cite[Proposition II.1.13]{Bass95} shows that started from the extremity of a finite size cone, the hitting time of the interior of the cone by the Brownian motion in $\mathbb R^N$ is almost-surely null. Now, the Brownian motion on $\spin^N$ can simply be obtained by projecting a $\mathbb R^N$-Brownian motion on $\spin^N$. Therefore, if started at the extremity of a cone in $\spin^N$, the Brownian motion on $\spin^N$  will also enter the finite cone immediatly. This implies in particular that if $A^c$ satisfies the Poincar\'e cone condition, the boundary of $A^c$ is regular for $\mathring A$ which is the assumption of Proposition \ref{th:asselah}.
\end{proof}


\section*{Acknowledgments}
We thank Ofer Zeitouni for suggesting to consider properties of the winding number that lead to Theorem \ref{thm:cltforw}.

\bibliographystyle{plain}
\bibliography{topological_metastability}

\vspace{20pt}

\end{document}